\newcommand{\abs}[1]{\left\vert#1\right\vert}
\newcommand{\set}[1]{\left\{#1\right\}}
\newcommand{\ds}{\displaystyle}
\newcommand{\p}[1]{\left(#1\right)}
\newtheorem{thm}{Theorem}[section]
\newtheorem{cor}[thm]{Corollary}
\newtheorem{lem}[thm]{Lemma}
\newtheorem{prop}[thm]{Proposition}
\newtheorem{defn}[thm]{Definition}
\begin{document}
\title{Identifying Complex Hadamard Submatrices of the Fourier Matrices via Primitive Sets}
\author{John E. Herr \& Troy M. Wiegand}
\address{Department of Mathematics, Statistics, \& Actuarial Science, Butler University, Jordan Hall 270, 4600 Sunset Ave., Indianapolis, IN 46208}
\email{jeherr@butler.edu, tmwiegan@butler.edu}
\subjclass[2010]{Primary: 15B34; Secondary 15B99}
\keywords{complex, Hadamard, submatrices, Fourier matrices, Hadamard triples}
\date{\today}

\begin{abstract}For a given selection of rows and columns from a Fourier matrix, we give a number of tests for whether the resulting submatrix is Hadamard based on the primitive sets of those rows and columns. In particular, we demonstrate that whether a given selection of rows and columns of a Fourier matrix forms a Hadamard submatrix is exactly determined by whether the primitive sets of those rows and columns are compatible with respect to the size of the Fourier matrix. This allows the partitioning of all submatrices into equivalence classes that will consist entirely of Hadamard or entirely of non-Hadamard submatrices and motivates the creation of compatibility graphs that represent this structure. We conclude with some results that facilitate the construction of these graphs for submatrix sizes 2 and 3.
\end{abstract}

\maketitle

\section{Introduction}

An $m\times m$ matrix $H$ is said to be a complex Hadamard matrix if its entries are complex numbers of unit modulus and its columns and rows are orthogonal. That is, $H$ is complex Hadamard if it is of the form $$H=\begin{bmatrix}e^{2\pi i\lambda_{11}} & e^{2\pi i\lambda_{12}} &\cdots & e^{2\pi i \lambda_{1m}}\\e^{2\pi i\lambda_{21}} & e^{2\pi i\lambda_{22}} & \cdots & e^{2\pi i\lambda_{2m}}\\\vdots &\ddots & & \vdots\\\vdots & & \ddots & \vdots \\e^{2\pi i\lambda_{m1}}&e^{2\pi i\lambda_{m2}}&\cdots&e^{2\pi i\lambda_{mm}}\end{bmatrix}$$ where $\lambda_{jk}\in \mathbb{R}$ and $$H^\ast H=H H^\ast = m\cdot I_{m},$$ where $I_m$ is the $m\times m$ identity matrix. We will omit the adjective ``complex'' throughout the rest of the paper. Two Hadamard matrices $H_1$ and $H_2$ are said to be $\textit{equivalent}$ if there exist permutation matrices $P_1$ and $P_2$ and unitary diagonal matrices $D_1$ and $D_2$ such that $H_1=P_1D_1H_2D_2P_2$.

Older research on Hadamard matrices focused mainly on the real-valued case, where all entries are $-1$ or $1$, but in recent times more attention has been placed on the general complex-valued case. Our particular motivation comes from applications in harmonic analysis, in which of special importance are Hadamard matrices arising as submatrices of the Fourier matrices.

The Fourier matrix $\mathcal{F}_m$ is the $m\times m$ matrix whose $(j,k)$th entry is $$\p{\mathcal{F}_m}_{jk}=\p{e^{2\pi i(j-1)(k-1)/m}}_{jk}.$$ In the above definition, $j$ and $k$ run from $1$ to $m$. However, it is often more convenient to think of the rows and columns of a Fourier matrix as being indexed from $0$ to $m-1$, so that
$$\p{\mathcal{F}_{m}}_{jk}=\p{e^{2\pi ijk/m}}_{jk}$$
for $j$ and $k$ running from $0$ to $m-1$. When we select rows and columns from $\mathcal{F}_m$ in this paper, will we regard it as being indexed in this latter manner.

\begin{defn}
    Let $J,K\subseteq\set{0,1,\ldots,m-1}$, with $\abs{J}=\abs{K}=n\leq m$, be a selection of rows and columns of $\mathcal{F}_m$. By $H_{J,K,m}$ we refer to the submatrix of $\mathcal{F}_m$ formed by keeping the rows $J$ and the columns $K$. That is, if $(j_1,j_2,\ldots,j_n)$ is a list of elements of $J$ ordered from least to greatest, and $(k_1,k_2,\ldots,k_n)$ is a list of elements of $K$ ordered from least to greatest, then the $(a,b)$th entry of $H_{J,K,m}$ is
    
    $$(H_{J,K,m})_{ab}=e^{2\pi ij_a k_b/m}.$$
\end{defn}

The above understanding allows us to more casually write $H_{J,K,m}=(e^{2\pi i jk/m})_{j\in J,k\in K}$. When the context is clear, we will frequently write $H_{J,K}$ instead of $H_{J,K,m}$. 

Hadamard submatrices of the Fourier matrices are important to harmonic analysis researchers for multiple reasons. In \cite{JP98}, Jorgensen and Pedersen realized the quaternary Cantor set as the attractor of the affine iterated function system
$$\set{\tau_b(x):=\frac{x+b}{4}:b\in\set{0,2}},$$ which then gives rise to the quaternary Cantor measure $\mu_4$.
By using the fact that with $J=\set{0,2}$ and $K=\set{0,1}$, $H_{J,K}$ is a Hadamard submatrix of $\mathcal{F}_4$, they showed that $L^2(\mu_4)$ possesses an orthogonal basis of functions of the form $e^{2\pi i\lambda x}$. In fact, they showed that their argument can be conducted in general: If $m$ is a positive integer and $B\subseteq\set{0,1,\ldots{m-1}}$, then we may form the affine iterated function system $$\set{\tau_b(x):=\frac{x+b}{m}:b\in B},$$ which by Hutchinson's Theorem possesses a unique compact attractor set, and which gives rise to a unique invariant measure $\mu$ supported on the attractor. If a set $L\subseteq\set{0,1,\ldots,m-1}$ can be found, $\abs{L}=\abs{B}$, such that $H_{B,L}$ is a Hadamard submatrix of $\mathcal{F}_m$, then the set of complex exponential functions $\set{e^{2\pi i\lambda x}}_{\lambda\in \Lambda}$ is orthogonal in $L^2(\mu)$, where
$$\Lambda=\set{\sum_{k=0}^{K}\ell_km^k:K\in\mathbb{N}_0,\ell_k\in L}.$$

Such a compatible $m$, $B$, and $L$ as this, where $H_{B,L,m}$ is Hadamard, creates what is called a Hadamard triple: $(m,B,L).$ Determining when such an $L$ can be found, and when it cannot, is thus important to the search for orthogonal bases of complex exponential functions. For more results concerning the connection of Hadamard triples to the construction of spectral measures, we refer the reader to \cite{DHL19}.

Another reason Hadamard submatrices of the Fourier matrices are important is because of their connection to the Fuglede Conjecture. The Fuglede Conjecture posits that a subset $\Omega$ of $\mathbb{R}^n$ is spectral with respect to Lebesgue measure, meaning there exists an orthogonal basis of functions of the form $e^{2\pi i\langle\vec{x},\vec{\lambda}\rangle}$, if and only if $\Omega$ tiles $\mathbb{R}^n$.  While the conjecture holds for certain special cases, Tao proved in \cite{Tao04} that the general version is false in dimensions 5 and higher. Kolountzakis and Matolcsi later extended this result in \cite{KM06}, showing the Fuglede Conjecture is false in dimensions 3 and higher. However, the conjecture remains unresolved in both directions in dimensions 1 and 2. 

In \cite{DJ13}, Dutkay and Jorgensen showed that the forward direction of the Fuglede conjecture in dimension 1 is equivalent to a Universal Tiling Conjecture (UTC). The UTC conjectures that if equally-sized sets of integers are spectral with respect to the counting measure and share the same spectrum, then they will tile the integers using the same translations. For a finite set of integers $A$, a set $\Lambda\subseteq\mathbb{Q}$, $\abs{\Lambda}=\abs{A}$, will be a spectrum for $A$ if and only if $(m,m\Lambda,A)$ is a Hadamard triple, where $m$ is an integer such that $m\Lambda\subseteq\mathbb{Z}$. Thus, knowing what Hadamard triples exist is 
important towards proving or disproving the UTC. Determining whether and by what translations a set of integers will tile $\mathbb{Z}$ is another problem, for which we direct the reader to \cite{MR1670646} and related works.

It is not difficult to see that Fourier matrices must contain Hadamard submatrices of certain orders. In particular, if $n\mid m$, then letting $J=\set{0,1,2,\ldots,n-1}$ and $K=\frac{m}{n}J$, we see that $H_{J,K}=\mathcal{F}_n$ is a submatrix of $\mathcal{F}_m$, and so $\mathcal{F}_m$ contains a Hadamard submatrix of order $n$. Moreover, this easy construction gives us a Hadamard triple: $(m,J,K)$ (and of course $(m,K,J)$ is also a Hadamard triple).

It is also not difficult to see that Hadamard submatrices occur in much greater abundance than this. For example, in \cite{Tad06}, Tadej showed that Fourier matrices are equivalent to Kronecker products of Fourier matrices of lower order. If $m=a_1\cdot a_2\cdots a_k$, where the $a_i$'s are relatively prime, then there exist permutation matrices $P_r$ and $P_c$ such that
\begin{align}\label{TadejDecomp}\mathcal{F}_m=P_r(\mathcal{F}_{a_1}\otimes\mathcal{F}_{a_2}\otimes\cdots\otimes\mathcal{F}_{a_k})P_c.
\end{align}
The permutation matrices $P_r$ and $P_c$ themselves are constructed by successive computation of B\'{e}zout coefficients. From any such factorization, it can be seen that each of the Fourier matrices in the decomposition, as well as any Hadamard submatrices within them, gives rise to equivalent Hadamard submatrices in $\mathcal{F}_m$. 

However, these factorizations do not fully settle the question of the Hadamard submatrix structure of all Fourier matrices. While the Kronecker decomposition certainly demonstrates the existence of a number of Hadamard submatrices, it does not exclude the possibility of other Hadamard submatrices arising synergistically. In \cite{BGH19}, the question was asked for which $n$ the possibility of any $n\times n$ Hadamard submatrix existing could be excluded. While some results were obtained, it is still an open question, as far as we are aware, whether there exists a Fourier matrix $\mathcal{F}_m$ that contains an $n\times n$ Hadamard submatrix where $n$ does not divide $m$. It is also possible for there to exist Hadamard submatrices whose orders do divide $m$, but whose existence is not attributable to a Fourier matrix of lower order in a Kronecker decomposition, as exampled here:

\begin{prop}Not all Hadamard submatrices of Fourier matrices are equivalent to Fourier matrices of lower order. Moreover, neither the presence nor the non-presence of a lower-order Fourier matrix $\mathcal{F}_n$ in a Kronecker decomposition of $\mathcal{F}_m$ as in $(\ref{TadejDecomp})$ in general implies all $n\times n$ Hadamard submatrices are equivalent to $\mathcal{F}_n$.
\end{prop}

\begin{proof}We prove this by demonstrating some examples. As seen in \cite{TZ06}, which references \cite{Ha97}, it is known that every $4\times 4$ Hadamard matrix is equivalent to one matrix in the family $\mathcal{G}_4=\set{F_4^{(1)}(a):a\in[0,1/2)}$, where $$F_4^{(1)}(a)=\begin{bmatrix}1 & 1 & 1 & 1\\1 & i\cdot e^{2\pi i a} & -1 & -i\cdot e^{2\pi ia}\\ 1 & -1 & 1 & -1 \\1 & -i\cdot e^{2\pi ia} & -1 & i\cdot e^{2\pi ia} \end{bmatrix}.$$
Consider the Fourier matrix $\mathcal{F}_{64}$. Note that since $64=2^6$, $\mathcal{F}_{64}$ does not decompose as in (\ref{TadejDecomp}) into Fourier matrices of lower order. Let $J=\set{0,3,12,15}$ and $K=\set{0,8,32,40}$. Then
$$H_{J,K}=\begin{bmatrix}1 & 1 & 1 & 1\\1 & e^{2\pi i24/64} & e^{2\pi i96/64} & e^{2\pi i120/64}\\1 & e^{2\pi i96/64} & e^{2\pi i384/64} & e^{2\pi i 480/64}\\1 & e^{2\pi i120/64} & e^{2\pi i480/64} & e^{2\pi i 600/64}
\end{bmatrix}=\begin{bmatrix}1 & 1 & 1 & 1\\1 & e^{2\pi i3/8} & -1 & e^{2\pi i7/8}\\1 & -1 & 1 & -1\\1 & e^{2\pi i7/8} & -1 & e^{2\pi i 3/8}
\end{bmatrix}=F_4^{(1)}(1/8).$$
On the other hand, $\mathcal{F}_4=F_4^{(1)}(0)$. Thus, $H_{J,K}$ is a $4\times 4$ Hadamard submatrix of $\mathcal{F}_{64}$ that is not equivalent to $\mathcal{F}_4$.

In the above example, $\mathcal{F}_4$ was not a factor in a Kronecker decomposition of $\mathcal{F}_{64}$ as in (\ref{TadejDecomp}). We now give a second example in which it is. Consider $\mathcal{F}_{60}$. By \cite{Tad06}, there exist permutation matrices $P_r$ and $P_c$ such that $$\mathcal{F}_{60}=P_r(\mathcal{F}_3\otimes\mathcal{F}_4\otimes\mathcal{F}_5)P_c.$$ Let $J=\set{0,1,6,7}$ and $K=\set{0,25,30,55}$. Then
$$H_{J,K}=\begin{bmatrix}1 & 1 & 1 & 1\\1 & e^{2\pi i25/60} & e^{2\pi i30/60} & e^{2\pi i55/60}\\1 & e^{2\pi i150/60} & e^{2\pi i180/60} & e^{2\pi i 330/60}\\1 & e^{2\pi i175/60} & e^{2\pi i210/60} & e^{2\pi i 385/60}
\end{bmatrix}=\begin{bmatrix}1 & 1 & 1 & 1\\1 & e^{2\pi i5/12} & -1 & e^{2\pi i11/12}\\1 & -1 & 1 & -1\\1 & e^{2\pi i11/12} & -1 & e^{2\pi i 5/12}
\end{bmatrix}=F_4^{(1)}(1/6).$$
Thus, $H_{J,K}$ is a $4\times 4$ Hadamard submatrix of $\mathcal{F}_{60}$ that is not equivalent to $\mathcal{F}_4$, even though $\mathcal{F}_4$ appears in a Kronecker decomposition of $\mathcal{F}_{60}$.
\end{proof}

Since not all Hadamard submatrices of Fourier matrices are equivalent to Fourier matrices, they cannot be fully predicted by Kronecker decompositions. Moreover, even if all the Hadamard submatrices of a particular Fourier matrix are equivalent to Fourier matrices, it does not follow $\textit{a priori}$ that they all correspond to factors in a Kronecker decomposition. 

This paper does not fully determine the Hadamard submatrix structure or resolve all unresolved questions either, but it hopes to contribute to their eventual resolution by providing alternative methods for obtaining and representing information about the Hadamard submatrix structure. These methods are based on computing an attribute based on the spacing of a row or column selection called a primitive set. By checking one row and column combination of a Fourier matrix, we will immediately know whether all other same-sized row and column combinations with the same primitive sets are Hadamard. This works even if those other row and column selections have not been or cannot be traced backwards through a Kronecker decomposition to a lower-order Fourier matrix. Moreover, when compatible primitive sets are found for one Fourier matrix, those primitive sets must be non-compatible for submatrices of any other size, even those in a Fourier matrix of a different order. 

We require some additional notation and results before we begin:

\begin{defn}
A selection of rows $J\subseteq\set{0,1,\ldots,m-1}$ from $\mathcal{F}_m$ will be associated with the polynomial
$$J(z)=\sum_{j\in J}z^j,$$
and likewise a selection of columns $K$ will be associated with the polynomial
$$K(z)=\sum_{k\in K}z^k.$$
\end{defn}

We will refer to the $s$th cyclotomic polynomial by $\Phi_s(z)$. Recall that $\Phi_s(z)$ is the minimal polynomial of the primitive $s$th roots of unity.

The following special sets and numbers will aid us throughout the rest of the paper:

\begin{defn}Let $X$ be a nonempty, finite subset of $\mathbb{N}_0$, and let $m\in\mathbb{N}$. Define the difference set of $X$ by $$\mathcal{D}(X):=\set{x_1-x_2:x_1,x_2\in X}.$$ Define the $m$-th primitive set of $X$ by $$\mathcal{P}_m(X):=\set{\frac{m}{\text{gcd}(m,d)}:d\in\mathcal{D}(X)}.$$ Define $$C_m(X):=\prod_{s\in\mathcal{P}_m(X)\setminus\set{1}}\Phi_s(1).$$\end{defn}

Since $$\Phi_s(1)=\begin{cases}0 & \text{if }s=1     \\p & \text{if }s=p^k,\text{ where }p\text{ is a prime}, k\in\mathbb{N}\\
        1 & \text{ otherwise}\end{cases},$$ $C_m(J)$ is a product of the prime bases of each element of $\mathcal{P}_m(X)$ that is a power of a prime. 
				
Note also that $0\in\mathcal{D}(X)$ for any $X$, and therefore $1\in\mathcal{P}_m(X)$ for any $m$ and $X$.

We will frequently use the fact that rigidly shifting the selected rows $J$ or $K$ of $\mathcal{F}_m$ does not affect whether the submatrix $H_{J,K}$ is Hadamard, nor does it change the $m$th primitive set of $J$ or $K$. We state these facts more precisely:

\begin{prop}\sloppy Suppose $J,K\subseteq\set{0,1,\ldots,m-1}$. Let $a,b\in\mathbb{Z}$, and let ${J^\prime=\set{j+a\mod m:j\in J}}$ and ${K^\prime=\set{k+b\mod m:k\in K}}$. If $H_{J,K}$ is a Hadamard submatrix of $\mathcal{F}_m$, then so is $H_{J^\prime,K^\prime}$. \end{prop}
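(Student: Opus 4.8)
The plan is to reduce the Hadamard condition to the pairwise orthogonality of the columns and then to show that this orthogonality is insensitive to both shifts. Since $H_{J,K}$ is square with unit-modulus entries, it is Hadamard if and only if its columns are pairwise orthogonal: the condition $H^\ast H=nI$ forces $H^{-1}=\frac1n H^\ast$ and hence $HH^\ast=nI$, while the diagonal entries of $H^\ast H$ are automatically $n$ because each column consists of $n$ entries of modulus $1$. Thus I only need to track the off-diagonal inner products of columns.

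First I would record two elementary observations. The map $x\mapsto x+a\bmod m$ is a bijection of $\set{0,1,\ldots,m-1}$, so $\abs{J'}=\abs{J}=n$ and $\abs{K'}=\abs{K}=n$, confirming that the shifts produce genuine $n\times n$ submatrices. Second, replacing $j+a$ by its residue $j+a\bmod m$ never changes the value of $e^{2\pi i(j+a)x/m}$ for an integer $x$, since the two exponents differ by an integer multiple of $2\pi i$; the same holds for the column shift. Together these let me ignore the reduction mod $m$ inside every exponential and handle the possible reordering of the shifted index sets for free.

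Next I would fix two distinct columns of $H_{J',K'}$, indexed by $k',k''\in K'$ with $k'=k+b\bmod m$ and $k''=\tilde k+b\bmod m$ for $k,\tilde k\in K$, and examine the corresponding off-diagonal entry of $H_{J',K'}^\ast H_{J',K'}$, namely $\sum_{j'\in J'}e^{2\pi ij'(k''-k')/m}$. Summation over a finite set is order-independent, so the permutation induced by the row shift is irrelevant, and I may sum over $j\in J$ using $j'=j+a\bmod m$, which gives $e^{2\pi ia(k''-k')/m}\sum_{j\in J}e^{2\pi ij(k''-k')/m}$. Since $k''-k'\equiv\tilde k-k\pmod m$, the remaining sum equals $\sum_{j\in J}e^{2\pi ij(\tilde k-k)/m}$, which is precisely the corresponding off-diagonal entry of $H_{J,K}^\ast H_{J,K}$. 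Because the shift is a bijection, $k'\neq k''$ forces $k\neq\tilde k$, so this entry vanishes by the hypothesis that $H_{J,K}$ is Hadamard; the leading unimodular factor leaves it zero. Hence the columns of $H_{J',K'}$ are orthogonal and $H_{J',K'}$ is Hadamard.

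The only real subtlety — the one place I would be careful — is the bookkeeping around the reduction mod $m$, both the possible reordering of the shifted index sets and the appearance of a residue inside each exponential; both are dispatched by the two remarks above (order-independence of the sum and periodicity of $e^{2\pi it}$ under integer shifts of $t$), after which the computation is purely formal. Equivalently, one could package the entire argument as the matrix identity $H_{J',K'}=c\,D_1 H_{J,K}D_2$, valid up to a permutation of rows and columns, where $c=e^{2\pi iab/m}$ is a unimodular scalar and $D_1,D_2$ are diagonal unitary matrices, and then invoke the standard fact that multiplying a Hadamard matrix on either side by unitary diagonal matrices (and by a unimodular scalar, or permuting its rows and columns) preserves the Hadamard property.
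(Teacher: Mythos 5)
Your proof is correct; note, however, that the paper itself supplies no proof of this proposition to compare against --- it cites \cite{BGH19} and remarks that the statement is a simple exercise for the reader. Your argument is a complete, self-contained verification and would serve as that exercise's solution: you reduce the Hadamard property to pairwise orthogonality of the columns, use periodicity of $t\mapsto e^{2\pi it}$ to absorb every reduction mod $m$ inside the exponentials, and observe that the row shift contributes only the unimodular factor $e^{2\pi ia(k''-k')/m}$ to each off-diagonal inner product, which therefore still vanishes. The two bookkeeping points you single out --- the reordering of the sorted index sets under the shift, and the residues appearing inside the exponentials --- are precisely the places where a careless argument would be sloppy, and you handle both correctly (order-independence of finite sums, and the fact that the shift is a bijection so distinct shifted columns come from distinct original columns). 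Your closing reformulation $H_{J',K'}=c\,D_1H_{J,K}D_2$ up to row and column permutations, with $c=e^{2\pi iab/m}$ unimodular and $D_1,D_2$ diagonal unitaries, is the cleanest packaging of the whole computation, since the Hadamard property is manifestly preserved by unimodular scaling, diagonal unitary multiplication, and permutation of rows and columns; it also makes transparent why the same argument shows the shifted submatrix is \emph{equivalent} to the original in the standard sense for complex Hadamard matrices, which is slightly more information than the proposition asks for.
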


The above proposition was proved in \cite{BGH19}, but it is also a simple exercise for the reader.

\begin{prop}\sloppy Let $J\subseteq\set{0,1,\ldots,m-1}$. Let $v$ be an integer. Let ${J^\prime=\set{j+v\mod{m}:j\in J}}$. Then $\mathcal{P}_m(J^\prime)=\mathcal{P}_m(J)$.
\end{prop}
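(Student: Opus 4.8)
The plan is to reduce everything to the elementary fact that $\gcd(m,\cdot)$, and hence the quantity $\frac{m}{\gcd(m,\cdot)}$, depends on its second argument only modulo $m$. First I would observe that the map $j\mapsto (j+v)\bmod m$ is a bijection of $\set{0,1,\ldots,m-1}$ onto itself, so it restricts to a bijection from $J$ onto $J'$; in particular $\abs{J'}=\abs{J}$, and the elements of $J'$ are exactly the residues of the shifted elements of $J$. This lets me pair up the elements of $J$ with those of $J'$, and correspondingly the pairs contributing to $\mathcal{D}(J)$ with those contributing to $\mathcal{D}(J')$.

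Next comes the heart of the argument: I would show that $\mathcal{D}(J')$ and $\mathcal{D}(J)$ consist of the same residues modulo $m$. Given $j_1,j_2\in J$, the corresponding difference in $\mathcal{D}(J')$ is $d'=((j_1+v)\bmod m)-((j_2+v)\bmod m)$, and since reduction mod $m$ does not change a residue class, $d'\equiv (j_1+v)-(j_2+v)=j_1-j_2\pmod{m}$. Thus every $d'\in\mathcal{D}(J')$ is congruent modulo $m$ to some $d=j_1-j_2\in\mathcal{D}(J)$, and conversely, using the bijection, every $d\in\mathcal{D}(J)$ arises in this way from a corresponding $d'\in\mathcal{D}(J')$.

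I would then invoke the elementary lemma that if $a\equiv b\pmod{m}$, then $\gcd(m,a)=\gcd(m,b)$ (a consequence of $\gcd(m,a)=\gcd(m,a+km)$ for every integer $k$, together with $\gcd(m,d)=\gcd(m,-d)$). Applying this to the congruent pair $d$ and $d'$ gives $\frac{m}{\gcd(m,d')}=\frac{m}{\gcd(m,d)}$, so the sets $\set{\frac{m}{\gcd(m,d)}:d\in\mathcal{D}(J)}$ and $\set{\frac{m}{\gcd(m,d')}:d'\in\mathcal{D}(J')}$ coincide element-for-element. By the definition of the $m$th primitive set, this is precisely the statement $\mathcal{P}_m(J)=\mathcal{P}_m(J')$.

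I do not anticipate any genuine obstacle here; the only point requiring care is the bookkeeping of the modular reduction appearing in the difference $d'$, namely confirming that passing from $j+v$ to $(j+v)\bmod m$ alters each difference only by an integer multiple of $m$, and hence leaves $\gcd(m,\cdot)$ — and with it the resulting primitive element $\frac{m}{\gcd(m,\cdot)}$ — unchanged.
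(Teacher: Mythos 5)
Your proof is correct and takes essentially the same route as the paper's: both rest on the observation that the shift alters each difference only by an integer multiple of $m$, combined with the invariance $\gcd(m,d)=\gcd(m,d+km)$, so each value $\frac{m}{\gcd(m,d)}$ is unchanged. The only cosmetic difference is that you obtain both inclusions simultaneously via the bijection $j\mapsto (j+v)\bmod m$, whereas the paper proves one containment and then appeals to symmetry (shifting by $-v$) for the reverse.
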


\begin{proof}Let $s^\prime\in\mathcal{P}_m(J^\prime)$. Then there exist $j_1^\prime,j_2^\prime\in J^\prime$ such that $s^\prime=\frac{m}{\gcd(m,j_2^\prime-j_1^\prime)}.$ There must exist $j_1,j_2\in J$ and integers $a$ and $b$ such that $j_1^\prime=j_1+v+am$ and $j_2^\prime=j_2+v+bm$. Then $$s^\prime=\frac{m}{\gcd(m,j_2+v+bm-j_1-v-am)}=\frac{m}{\gcd(m,j_2-j_1+(b-a)m)}.$$ It is a basic fact of number theory that for any integers $r$, $R$, and $c$, $\gcd(r,R)=\gcd(r,R+cr)$. Therefore,
$$s^\prime=\frac{m}{\gcd(m,j_2-j_1)}\in\mathcal{P}_m(J).$$ Hence, $\mathcal{P}_m(J^\prime)\subseteq\mathcal{P}_m(J)$. Since $J$ is obtained from $J^\prime$ by shifting every entry by $-v$, a symmetrical argument shows that $\mathcal{P}_m(J)\subseteq\mathcal{P}_m(J^\prime)$, and so $\mathcal{P}_m(J^\prime)=\mathcal{P}_m(J)$.
\end{proof}

\begin{defn}Let $p$ be a prime and $n$ be a nonzero integer. By $\nu_p(n)$ we denote the $p$-adic order of $n$. That is, $$\nu_p(n):=\max\set{v\in\mathbb{N}_0:p^v\text{ divides }n}.$$ If $X$ is a set of nonzero integers, then we define
$$\nu_{p}^\text{max}(X)=\max\set{\nu_p(n):n\in X}$$ and $$\nu_{p}^\text{min}(X)=\min\set{\nu_p(n):n\in X}.$$
\end{defn}

\begin{prop}\label{compprop}Let $p$ be a prime and let $X\subseteq\set{0,1,2,\ldots,m-1}$, with $\abs{X}\geq2$. Then,
\begin{align*}\nu_p^\text{max}(\mathcal{P}_m(X)\setminus\set{1})&\leq\max\set{0,\nu_p(m)-\nu_p^\text{min}(\mathcal{D}(X)\setminus\set{0})},\\
\nu_p^\text{min}(\mathcal{P}_m(X)\setminus\set{1})&\geq\nu_p(m)-\nu_p^\text{max}(\mathcal{D}(X)\setminus\set{0}),\text{ and}\\
\nu_p^\text{min}(\mathcal{D}(X)\setminus\set{0})&\geq\nu_p(m)-\nu_p^\text{max}(\mathcal{P}_m(X)\setminus\set{1}).\end{align*}
Furthermore, if $\nu_p^\text{min}(\mathcal{P}_m(X)\setminus\set{1})\geq1$, then
$$\nu_p^\text{max}(\mathcal{D}(X)\setminus\set{0})\leq\nu_p(m)-\nu_p^\text{min}(\mathcal{P}_m(X)\setminus\set{1}).$$
\end{prop}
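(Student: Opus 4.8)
The plan is to reduce all four statements to a single identity relating the $p$-adic order of a primitive-set element to that of the difference that produced it. Fix a prime $p$. For a nonzero difference $d\in\mathcal{D}(X)$, write $s=\frac{m}{\gcd(m,d)}$ for the corresponding element of $\mathcal{P}_m(X)$. Since $\nu_p(\gcd(m,d))=\min\set{\nu_p(m),\nu_p(d)}$ and $\nu_p$ is additive on products, so that $\nu_p(s)=\nu_p(m)-\nu_p(\gcd(m,d))$, I would first establish
$$\nu_p(s)=\nu_p(m)-\min\set{\nu_p(m),\nu_p(d)}=\max\set{0,\nu_p(m)-\nu_p(d)}.$$
I would also record the bookkeeping fact that, because $X\subseteq\set{0,1,\ldots,m-1}$ forces $\abs{d}<m$, we have $m\mid d$ if and only if $d=0$; hence $s=1$ exactly when $d=0$, so that $\mathcal{P}_m(X)\setminus\set{1}=\set{\frac{m}{\gcd(m,d)}:d\in\mathcal{D}(X)\setminus\set{0}}$ and the max and min of $\nu_p$ over $\mathcal{P}_m(X)\setminus\set{1}$ are taken over exactly the nonzero differences. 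Both of these index sets are nonempty because $\abs{X}\geq2$.

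For the first inequality, since $\nu_p(d)\geq\nu_p^\text{min}(\mathcal{D}(X)\setminus\set{0})$ for every nonzero $d$ and $t\mapsto\max\set{0,t}$ is nondecreasing, the displayed identity gives $\nu_p(s)\leq\max\set{0,\nu_p(m)-\nu_p^\text{min}(\mathcal{D}(X)\setminus\set{0})}$ for each such $s$; taking the maximum over all $s$ yields the claim. For the second inequality I would use the trivial bound $\max\set{0,t}\geq t$: the identity gives $\nu_p(s)\geq\nu_p(m)-\nu_p(d)\geq\nu_p(m)-\nu_p^\text{max}(\mathcal{D}(X)\setminus\set{0})$ for every nonzero $d$, and taking the minimum over $s$ finishes it.

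The third inequality is cleanest if I avoid the identity's truncation and instead use that $\gcd(m,d)$ divides $d$, so that $\nu_p(\gcd(m,d))\leq\nu_p(d)$. Combined with $\nu_p(\gcd(m,d))=\nu_p(m)-\nu_p(s)$, this gives $\nu_p(d)\geq\nu_p(m)-\nu_p(s)\geq\nu_p(m)-\nu_p^\text{max}(\mathcal{P}_m(X)\setminus\set{1})$ for every nonzero $d$, and minimizing over $d$ gives the stated bound. Finally, for the ``furthermore'' clause, the hypothesis $\nu_p^\text{min}(\mathcal{P}_m(X)\setminus\set{1})\geq1$ means $\nu_p(s)\geq1$ for every nonzero $d$, which by the identity forces $\nu_p(m)-\nu_p(d)\geq1$ and hence removes the truncation: $\nu_p(s)=\nu_p(m)-\nu_p(d)$, that is, $\nu_p(d)=\nu_p(m)-\nu_p(s)$. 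Taking the maximum over nonzero $d$ turns this into $\nu_p^\text{max}(\mathcal{D}(X)\setminus\set{0})=\nu_p(m)-\nu_p^\text{min}(\mathcal{P}_m(X)\setminus\set{1})$, which is stronger than the asserted inequality.

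I do not expect a genuine obstacle here; the only thing requiring care is the $\max\set{0,\cdot}$ truncation in the key identity. It is what makes the first two statements inequalities rather than equalities, what forces the divisibility trick (rather than the identity directly) in the third, and what the extra hypothesis is precisely designed to eliminate in the last. Keeping straight which direction each truncation can be dropped or must be retained is essentially the whole content of the argument.
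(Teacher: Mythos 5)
Your proof is correct and follows essentially the same route as the paper's: both rest on the identity $\nu_p\bigl(\tfrac{m}{\gcd(m,d)}\bigr)=\nu_p(m)-\min\set{\nu_p(m),\nu_p(d)}$ together with the surjection $d\mapsto\frac{m}{\gcd(m,d)}$ from $\mathcal{D}(X)\setminus\set{0}$ onto $\mathcal{P}_m(X)\setminus\set{1}$, with the same truncation-removal argument for the final clause. Your closing observation that the ``furthermore'' bound is actually an equality is a small strengthening the paper does not state, but the underlying argument is identical.
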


\begin{proof}First, note that since $\abs{X}\geq2$, $\mathcal{D}(X)\setminus\set{0}$ is nonempty. If $d\in\mathcal{D}(X)\setminus\set{0}$, then $-m< d< m$ and $d\neq0$, and so $\frac{m}{\gcd(m,d)}\neq 1$. It follows that $\mathcal{P}_m(X)\setminus\set{1}$ is nonempty.

Let $s\in\mathcal{P}_m(X)\setminus\set{1}$. Let $d\in\mathcal{D}(X)\setminus\set{0}$ such that $s=\frac{m}{\gcd(m,d)}$. Then
\begin{align*}\nu_p(s)&=\nu_p(m)-\nu_p(\gcd(m,d))\\
&=\nu_p(m)-\min\set{\nu_p(m),\nu_p(d)}\\
&\leq\nu_p(m)-\min\set{\nu_p(m),\nu_p^\text{min}(\mathcal{D}(X)\setminus\set{0})}\\
&=\max\set{0,\nu_p(m)-\nu_p^\text{min}(\mathcal{D}(X)\setminus\set{0})},\end{align*}
and
\begin{align*}\nu_p(s)&=\nu_p(m)-\nu_p(\gcd(m,d))\\
&=\nu_p(m)-\min\set{\nu_p(m),\nu_p(d)}\\
&\geq\nu_p(m)-\nu_p(d)\\
&\geq\nu_p(m)-\nu_p^\text{max}(\mathcal{D}(X)\setminus\set{0}).\end{align*}
Since $s$ was arbitrary, 
$$\nu_p^\text{max}(\mathcal{P}_m(X)\setminus\set{1})\leq\text{max}\set{0,\nu_p(m)-\nu_p^\text{min}(\mathcal{D}(X)\setminus\set{0})},$$ and
$$\nu_p^\text{min}(\mathcal{P}_m(X)\setminus\set{1})\geq\nu_p(m)-\nu_p^\text{max}(\mathcal{D}(X)\setminus\set{0}).$$

Now let $d\in\mathcal{D}(X)\setminus\set{0}$. Let $s=\frac{m}{\gcd(m,d)}$. Then $s\in\mathcal{P}_m(X)\setminus\set{1}$. Observe that
\begin{align*}\nu_p^\text{max}(\mathcal{P}_m(X)\setminus\set{1})&\geq\nu_p(s)\\
&=\nu_p(m)-\nu_p(\gcd(m,d))\\
&\geq\nu_p(m)-\nu_p(d).
\end{align*}
Therefore, since $d$ was arbitrary, $$\nu_p^\text{min}(\mathcal{D}(X)\setminus\set{0})\geq\nu_p(m)-\nu_p^\text{max}(\mathcal{P}_m(X)\setminus\set{1}).$$

Now, suppose $\nu_p^\text{min}(\mathcal{P}_m(X)\setminus\set{1})\geq1$. Let $d\in\mathcal{D}(X)\setminus\set{0}$. Let $s=\frac{m}{\gcd(m,d)}$. Then $s\in\mathcal{P}_m(X)\setminus\set{1}$, and by assumption, $1\leq\nu_p(s)=\nu_p(m)-\nu_p(\gcd(m,d))$. It follows that $\nu_p(\gcd(m,d))<\nu_p(m)$. Since $\nu_p(\gcd(m,d))=\min\set{\nu_p(m),\nu_p(d)}$, this implies $\nu_p(\gcd(m,d))=\nu_p(d)$. Hence,
\begin{align*}\nu_p(d)&=\nu_p(\gcd(m,d))\\
&=\nu_p(m)-\nu_p(s)\\
&\leq\nu_p(m)-\nu_p^\text{min}(\mathcal{P}_m(X)\setminus\set{1}).
\end{align*}
Therefore, since $d$ was arbitrary,
$$\nu_p^\text{max}(\mathcal{D}(X)\setminus\set{0})\leq\nu_p(m)-\nu_p^\text{min}(\mathcal{P}_m(X)\setminus\set{1}).$$
\end{proof}

\section{Main Results}

\begin{thm}\label{Ctheorem}If $C_m(J)$ does not divide $\abs{J}$, then $H_{J,K}$ cannot be a Hadamard submatrix of $\mathcal{F}_m$ for any $K$.
\end{thm}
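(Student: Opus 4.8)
I would prove the contrapositive: assuming $H_{J,K}$ is a Hadamard submatrix of $\mathcal{F}_m$ for some choice of columns $K$, I would show that $C_m(J)$ must divide $\abs{J}$. The starting point is to unpack the Hadamard condition, using the relation $H_{J,K}H_{J,K}^\ast = nI_n$ (equivalent, for a square matrix, to $H_{J,K}^\ast H_{J,K}=nI_n$), into orthogonality relations among the rows. Writing $\omega=e^{2\pi i/m}$, the inner product of the rows indexed by $j$ and $j'$ is $\sum_{k\in K}e^{2\pi i(j-j')k/m}=K(\omega^{j-j'})$, which must vanish whenever $j\neq j'$. Thus for every $d\in\mathcal{D}(J)\setminus\set{0}$ we have $K(\omega^d)=0$.

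The key observation is to identify $\omega^d$ as a root of unity of a controlled order. Writing $g=\gcd(m,d)$, one has $\omega^d=e^{2\pi i(d/g)/(m/g)}$ with $\gcd(d/g,m/g)=1$, so $\omega^d$ is a primitive $s$th root of unity with $s=m/\gcd(m,d)$, precisely an element of $\mathcal{P}_m(J)\setminus\set{1}$. Since $K(z)\in\mathbb{Z}[z]$ and $\Phi_s$ is the minimal polynomial of the primitive $s$th roots of unity, $K(\omega^d)=0$ forces $\Phi_s(z)\mid K(z)$; by Gauss's lemma the quotient lies in $\mathbb{Z}[z]$ because $\Phi_s$ is monic. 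Letting $d$ range over $\mathcal{D}(J)\setminus\set{0}$, we obtain $\Phi_s\mid K$ for every $s\in\mathcal{P}_m(J)\setminus\set{1}$.

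Because distinct cyclotomic polynomials share no common roots and are therefore pairwise coprime in $\mathbb{Z}[z]$, their product over the distinct values $s\in\mathcal{P}_m(J)\setminus\set{1}$ also divides $K$. I would then write $K(z)=\left(\prod_{s\in\mathcal{P}_m(J)\setminus\set{1}}\Phi_s(z)\right)Q(z)$ with $Q\in\mathbb{Z}[z]$ and evaluate at $z=1$. This yields $K(1)=C_m(J)\cdot Q(1)$, and since $Q(1)\in\mathbb{Z}$, we conclude $C_m(J)\mid K(1)$. Finally, $K(1)=\sum_{k\in K}1=\abs{K}=\abs{J}$, so $C_m(J)\mid\abs{J}$, as desired.

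I expect the only delicate points to be bookkeeping rather than conceptual: confirming that dividing by the monic integer polynomial $\prod_s\Phi_s$ keeps the quotient in $\mathbb{Z}[z]$, so that evaluation at $1$ produces a genuine integer divisibility rather than a merely rational one, and verifying that coprimality of the distinct $\Phi_s$ legitimately upgrades the individual divisibilities $\Phi_s\mid K$ to divisibility by their product. The crucial insight driving the argument is that $\mathcal{P}_m(J)$ records exactly the orders of the roots of unity at which the companion polynomial $K$ is forced to vanish, while the evaluations $\Phi_s(1)$ and $K(1)=\abs{J}$ convert this root structure into the arithmetic divisibility statement.
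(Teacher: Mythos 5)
Your proposal is correct and follows essentially the same route as the paper's proof: row orthogonality forces $K(z)$ to vanish at a primitive $s$th root of unity for each $s\in\mathcal{P}_m(J)\setminus\set{1}$, hence $\prod_s\Phi_s(z)$ divides $K(z)$ in $\mathbb{Z}[z]$, and evaluating at $z=1$ gives $C_m(J)\mid\abs{J}$. The only difference is cosmetic (contrapositive rather than contradiction), and you in fact make explicit two points the paper leaves implicit --- the pairwise coprimality of distinct cyclotomic polynomials and the integrality of the quotient by a monic divisor.
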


\begin{proof}Suppose, for the sake of contradiction, that $H_{J,K}$ were Hadamard. Let $s\in\mathcal{P}_m(J)\setminus\set{1}$. Then there exists a $d\in\mathcal{D}(J)$ such that $s=\frac{m}{\gcd(m,d)}$, and since $s\neq1$, $d\neq0$. It follows that there exist distinct rows $j_1,j_2\in J$ such that $d=j_1-j_2$. Since distinct rows of a Hadamard matrix are orthogonal, we have
    $$\sum_{k\in K}e^{2\pi idk/m}=0.$$
    This implies that $e^{2\pi id/m}$ is a root of $K(z)$. Since $e^{2\pi id/m}$ is a primitive $s$-th root of unity, it follows that the cyclotomic polynomial $\Phi_s(z)$ divides $K(z)$.
    
    Therefore there exists some polynomial $p(z)$ with integer coefficients such that
    $$K(z)=p(z)\prod_{s\in\mathcal{P}_m(J)\setminus\set{1}}\Phi_s(z).$$
    However, this means that 
    \begin{align*}\abs{J} & =\abs{K}                                                   \\
                & =K(1)                                                      \\
                & =p(1)\prod_{s\in\mathcal{P}_m(J)\setminus\set{1}}\Phi_s(1) \\
                & =p(1)C_m(J).                                                \\
    \end{align*}
    This contradicts the fact that $C_m(J)$ does not divide $\abs{J}$.
\end{proof}

\textbf{Example:} Consider $\mathcal{F}_{6000}$, and let $J=\set{0,5,375}$. We have $\mathcal{D}(J)=\set{0,\pm5,\pm370,\pm375}$, and $\mathcal{P}_{6000}(J)=\set{1,16,600,1200}$. Then $C_{6000}(J)=2\cdot1\cdot1=2$. Since $2$ does not divide $3$, by Theorem \ref{Ctheorem}, $H_{J,K}$ (and by symmetry, $H_{K,J}$) is not a Hadamard submatrix of $\mathcal{F}_{6000}$ for any $K$.

\begin{cor}\label{Ctheoremcor1}If $C_m(J)>\abs{J}$, then $H_{J,K}$ cannot be a Hadamard submatrix of $\mathcal{F}_m$ for any $K$.
\end{cor}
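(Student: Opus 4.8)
The plan is to derive this corollary as an immediate consequence of Theorem \ref{Ctheorem}, which already handles the case where $C_m(J)$ fails to divide $\abs{J}$. The only work is to observe that the strict inequality $C_m(J)>\abs{J}$ forces such a divisibility failure. First I would confirm that $C_m(J)$ is a positive integer: by its definition as $\prod_{s\in\mathcal{P}_m(J)\setminus\set{1}}\Phi_s(1)$ and the stated evaluation of $\Phi_s(1)$, it is a (possibly empty) product of primes and $1$'s, and is therefore a well-defined positive integer (the empty product being $1$). Likewise, since any selection of rows forming a submatrix is nonempty, $\abs{J}\geq1$ is a positive integer.

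Next I would invoke the elementary fact that if a positive integer $d$ divides a positive integer $n$, then $d\leq n$. Taking the contrapositive, the hypothesis $C_m(J)>\abs{J}$ immediately yields that $C_m(J)$ does not divide $\abs{J}$. At this point the conclusion follows directly: by Theorem \ref{Ctheorem}, if $C_m(J)$ does not divide $\abs{J}$, then $H_{J,K}$ cannot be a Hadamard submatrix of $\mathcal{F}_m$ for any $K$.

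There is no real obstacle here; the statement is essentially a repackaging of Theorem \ref{Ctheorem} under a more easily checked sufficient condition. The only point requiring any care is verifying that both $C_m(J)$ and $\abs{J}$ are genuine positive integers, which is exactly what licenses the passage from the inequality $C_m(J)>\abs{J}$ to the non-divisibility needed to apply the theorem.
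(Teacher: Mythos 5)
Your proof is correct and matches the paper's own argument exactly: both pass from the strict inequality $C_m(J)>\abs{J}$ to the non-divisibility of $\abs{J}$ by $C_m(J)$, and then invoke Theorem \ref{Ctheorem}. The additional verification that $C_m(J)$ and $\abs{J}$ are positive integers is a fine (if implicit in the paper) bit of care, but does not change the route.
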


\begin{proof}If $C_m(J)>\abs{J}$, then $C_m(J)$ does not divide $\abs{J}$. So by Theorem \ref{Ctheorem}, $H_{J,K}$ is not Hadamard.\end{proof}

\textbf{Example:} Consider $\mathcal{F}_6$, and let $J=\set{0,4}$. We have $\mathcal{D}(J)=\set{0,\pm4}$, and $\mathcal{P}_6(J)=\set{1,3}$. Therefore, $C_6(J)=3>\abs{J}=2$. It follows that $H_{J,K}$ (and by symmetry, $H_{K,J}$) is not a Hadamard submatrix of $\mathcal{F}_6$ for any $K$.

\begin{cor}\label{Ctheoremcor2}If $\mathcal{P}_m(J)$ contains all the prime power factors of $m$ but lacks at least one positive factor of $m$, then $H_{J,K}$ (and by symmetry, $H_{K,J}$) is not a Hadamard submatrix of $\mathcal{F}_m$ for any $K$.
\end{cor}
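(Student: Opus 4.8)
The plan is to reduce this statement to Corollary \ref{Ctheoremcor1} by showing that, under the stated hypotheses, $C_m(J)=m$ while $\abs{J}<m$; then $C_m(J)>\abs{J}$ and the corollary applies directly. So the whole proof amounts to an exact evaluation of $C_m(J)$ together with a short argument that $J$ cannot be the full row set.

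First I would pin down $C_m(J)$. The key observation is that every element of $\mathcal{P}_m(J)$ has the form $\frac{m}{\gcd(m,d)}$ and is therefore a divisor of $m$. Consequently $\mathcal{P}_m(J)$ cannot contain any prime power that fails to divide $m$, so the prime-power elements of $\mathcal{P}_m(J)\setminus\set{1}$ are precisely those prime power divisors $p^b$ of $m$ that lie in $\mathcal{P}_m(J)$. Since by hypothesis $\mathcal{P}_m(J)$ contains every prime power factor $p^b$ of $m$ (ranging over $1\leq b\leq\nu_p(m)$ for each prime $p\mid m$), the prime-power part of $\mathcal{P}_m(J)\setminus\set{1}$ is exactly the complete set of prime power divisors of $m$. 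Recalling that $\Phi_s(1)$ equals the prime base $p$ when $s=p^\alpha$ and equals $1$ for all other $s>1$, only these prime-power elements contribute nontrivially to the defining product, and I would compute
\[
C_m(J)=\prod_{p\mid m}\;\prod_{b=1}^{\nu_p(m)}p=\prod_{p\mid m}p^{\nu_p(m)}=m.
\]

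It then remains to show $\abs{J}<m$, and here I would invoke the second hypothesis that $\mathcal{P}_m(J)$ lacks at least one positive factor of $m$. If instead $J$ were the full set $\set{0,1,\ldots,m-1}$, then for every divisor $s>1$ of $m$ the difference $d=m/s$ lies in $\mathcal{D}(J)$ and yields $\frac{m}{\gcd(m,d)}=\frac{m}{m/s}=s$, while $s=1$ is always present; hence $\mathcal{P}_m(J)$ would contain every positive divisor of $m$, contradicting the hypothesis. Thus $J$ is a proper subset, so $\abs{J}<m=C_m(J)$, and Corollary \ref{Ctheoremcor1} finishes the proof. (One could equally finish via Theorem \ref{Ctheorem}, since $m=C_m(J)$ cannot divide any $\abs{J}$ with $1\leq\abs{J}<m$.)

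The step I expect to require the most care is the exact evaluation of $C_m(J)$: one must be careful that the phrase ``contains all prime power factors of $m$'' is read as containing every prime power $p^b$ dividing $m$ and not merely the maximal ones, since it is precisely the full multiplicity $b=1,\ldots,\nu_p(m)$ that makes the product telescope to $m$ rather than to the radical of $m$. The remaining step, deducing $\abs{J}<m$ from the missing factor, is comparatively routine once one notes that the full row set forces $\mathcal{P}_m(J)$ to be the entire divisor set of $m$.
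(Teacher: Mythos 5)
Your proposal is correct and follows essentially the same route as the paper: both establish that the hypothesis forces $C_m(J)=m$, both use the fact that only the full row set $\set{0,1,\ldots,m-1}$ could have every divisor of $m$ in its primitive set (which the "lacks a factor" hypothesis forbids), and both conclude via Corollary \ref{Ctheoremcor1}. The only difference is arrangement — the paper argues by contradiction from "suppose $H_{J,K}$ is Hadamard" while you first prove $\abs{J}<m$ directly and then apply the corollary — and your extra care in evaluating $C_m(J)$ (noting that all prime powers $p^b$, $1\leq b\leq\nu_p(m)$, appear and that no prime powers outside the divisors of $m$ can occur) makes explicit what the paper asserts in one line.
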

\begin{proof}If $\mathcal{P}_m(J)$ contains all the prime power factors of $m$, then $C_m(J)=m$. Therefore, if $H_{J,K}$ were Hadamard, then $\abs{J}\geq C_m(J)=m$ by Corollary \ref{Ctheoremcor1}, implying that in fact $J=\set{0,1,\ldots,m-1}$. This means $\mathcal{P}_m(J)$ contains all the positive factors of $m$, which is a contradiction.
\end{proof}

\textbf{Example:} Consider $\mathcal{F}_{12}$, and let $J=\set{0,1,6,9}$. Then $\mathcal{D}(J)=\set{0,\pm1,\pm3,\pm5\pm6,\pm8,\pm9}$, and $\mathcal{P}_{12}(J)=\set{1,2,3,4,12}$. So $\mathcal{P}_{12}(J)$ contains all the prime power factors of $m=12$, namely $2$, $3$, and $4$, but it lacks the factor $6$. Hence, by Corollary \ref{Ctheoremcor2}, $H_{J,K}$ (and by symmetry, $H_{K,J}$) is not a Hadamard submatrix of $\mathcal{F}_{12}$ for any $K$.

\begin{thm}Let $J,K\subseteq\set{0,1,\ldots,m-1}$, and let $A$ be a set of nonnegative integers such that $K\oplus A$ contains exactly one representative from each congruence class modulo $m$. If for all $s\in\mathcal{P}_m(J)\setminus\set{1}$, $\Phi_s(z)$ fails to divide $A(z)$, then $H_{J,K}$ is a Hadamard submatrix of $\mathcal{F}_m$.
\end{thm}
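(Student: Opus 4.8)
The plan is to reduce the Hadamard condition to a divisibility statement about $K(z)$ and cyclotomic polynomials, and then to extract exactly that divisibility from the tiling hypothesis by evaluating a polynomial identity at roots of unity. In essence the argument runs the reasoning of Theorem \ref{Ctheorem} in reverse. First I would reduce to row orthogonality. Since $H_{J,K}$ is square with all entries of modulus one, each of its $\abs{J}$ rows has norm $\sqrt{\abs{J}}$, so it suffices that the rows be mutually orthogonal: row orthogonality gives $H_{J,K}H_{J,K}^\ast=\abs{J}\cdot I$, and for a square matrix this forces $H_{J,K}^\ast H_{J,K}=\abs{J}\cdot I$ as well. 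For distinct rows $j_1,j_2\in J$, orthogonality reads $\sum_{k\in K}e^{2\pi i(j_1-j_2)k/m}=K(\omega^{d})=0$, where $\omega=e^{2\pi i/m}$ and $d=j_1-j_2\neq0$. As $\omega^d$ is a primitive $s$-th root of unity with $s=\frac{m}{\gcd(m,d)}\in\mathcal{P}_m(J)\setminus\set{1}$, and $\Phi_s$ is its minimal polynomial over $\mathbb{Q}$ while $K$ has integer coefficients, this is equivalent to $\Phi_s(z)\mid K(z)$. Since every element of $\mathcal{P}_m(J)\setminus\set{1}$ arises from such a difference, and $\Phi_s\mid K$ forces $K$ to vanish at all primitive $s$-th roots of unity, the entire problem reduces to showing $\Phi_s(z)\mid K(z)$ for every $s\in\mathcal{P}_m(J)\setminus\set{1}$.

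Next I would encode the tiling hypothesis as a polynomial identity. The assumption that $K\oplus A$ contains exactly one representative from each congruence class modulo $m$ says that, with the multiplicities coming from the direct sum, the elements of $K+A$ reduce modulo $m$ to each of $0,1,\ldots,m-1$ exactly once. In terms of generating polynomials this becomes
$$K(z)A(z)\equiv 1+z+\cdots+z^{m-1}\pmod{z^m-1},$$
that is, $K(z)A(z)=Q(z)(z^m-1)+\sum_{i=0}^{m-1}z^i$ for some $Q(z)\in\mathbb{Z}[z]$.

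To finish, I would fix $s\in\mathcal{P}_m(J)\setminus\set{1}$ and let $\zeta_s$ be a primitive $s$-th root of unity. Because $s\mid m$ and $s>1$, we have $\zeta_s^m=1$ and $\zeta_s\neq1$, so evaluating the identity at $\zeta_s$ annihilates the $(z^m-1)$ term and yields
$$K(\zeta_s)A(\zeta_s)=\sum_{i=0}^{m-1}\zeta_s^i=\frac{\zeta_s^m-1}{\zeta_s-1}=0.$$
The hypothesis that $\Phi_s(z)$ does not divide $A(z)$ means precisely that $A(\zeta_s)\neq0$, since $\Phi_s$ is the minimal polynomial of $\zeta_s$. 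Hence $K(\zeta_s)=0$, so $\Phi_s(z)\mid K(z)$. As $s$ was arbitrary, the reduction of the first step is met and $H_{J,K}$ is Hadamard.

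I expect the only genuinely delicate point to be the bookkeeping in the second step: correctly reading the $\oplus$/tiling hypothesis as the congruence $K(z)A(z)\equiv\sum_{i=0}^{m-1}z^i\pmod{z^m-1}$, and confirming that evaluation at $\zeta_s$ is legitimate despite working modulo $z^m-1$ (this is exactly where $s\mid m$ is used). Once that identity is in hand the conclusion is immediate, and the remaining manipulations are the same root-of-unity and cyclotomic-divisibility steps already employed in the proof of Theorem \ref{Ctheorem}.
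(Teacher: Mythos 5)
Your proof is correct and is essentially the paper's argument in polynomial dress: the congruence $K(z)A(z)\equiv 1+z+\cdots+z^{m-1}\pmod{z^m-1}$, evaluated at the root of unity $e^{2\pi i(j_1-j_2)/m}$, is exactly the paper's factorization $0=\sum_{k=0}^{m-1}e^{2\pi i(j_1-j_2)k/m}=\p{\sum_{a\in A}e^{2\pi ia(j_1-j_2)/m}}\p{\sum_{k\in K}e^{2\pi i(j_1-j_2)k/m}}$, and both arguments then cancel the $A$-factor using the hypothesis that $\Phi_s\nmid A(z)$. The only cosmetic difference is that you first record the divisibility $\Phi_s(z)\mid K(z)$ and then specialize back to the specific roots $e^{2\pi i(j_1-j_2)/m}$, whereas the paper establishes orthogonality of each pair of rows directly.
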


\begin{proof}
    Let $j_1,j_2$ be distinct elements of $J$. Since the Fourier matrix $\mathcal{F}_m$ is Hadamard, we have
    \begin{align*}0 & =\sum_{k=0}^{m-1}e^{2\pi i(j_1-j_2)k/m}                                         \\
          & =\sum_{a\in A}\sum_{k\in K}e^{2\pi i(j_1-j_2)(k+a)/m}                           \\
          & =\sum_{a\in A}\p{e^{2\pi i a(j_1-j_2)/m}\sum_{k\in K}e^{2\pi i(j_1-j_2)k/m}}    \\
          & =\p{\sum_{a\in A}e^{2\pi ia(j_1-j_2)/m}}\p{\sum_{k\in K}e^{2\pi i(j_1-j_2)k/m}}.
    \end{align*}
    Now, $e^{2\pi i(j_1-j_2)/m}$ is a primitive $s$-th root of unity for some $s\in\mathcal{P}_m(J)\setminus\set{1}$. Since $\Phi_s(z)$ does not divide $A(z)$, it follows that $e^{2\pi i(j_1-j_2)/m}$ is not a root of $A(z)$, and hence $$\sum_{a\in A}e^{2\pi ia(j_1-j_2)/m}\neq0.$$ It follows that
    $$\sum_{k\in K}e^{2\pi i(j_1-j_2)k/m}=0,$$ which completes the proof.
\end{proof}

\textbf{Example:} Let $m=10$, $J=\set{0,1,7,8,9}$, and $K=\set{0,2,4,6,8}$. If we let $A=\set{0,1}$, then $K\oplus A=\set{0,1,2,3,4,5,6,7,8,9}$, which contains exactly one representative of each congruence class modulo $10$. Note that $\mathcal{D}(J)=\set{0,\pm1,\pm2,\pm6,\pm7,\pm8,\pm9}$, and so $\mathcal{P}_{10}(J)=\set{1,5,10}$. We have that $\Phi_5(z)=z^4+z^2+z+1$ and $\Phi_{10}(z)=z^4-z^3+z^2-z+1$, neither of which divide $A(z)=1+z$. Hence, $H_{J,K}$ is a Hadamard submatrix of $\mathcal{F}_{10}$. In fact, the only cyclotomic polynomial that divides $A(z)$ is $\Phi_2(z)=A(z)$. Therefore, $H_{J,K}$ would be Hadamard for any other $J$ so long as $2\not\in\mathcal{P}_{10}(J)$.

\begin{lem}\label{tielemma}Let $J,K\subseteq\set{0,1,\ldots,m-1}$ with $\abs{J}=\abs{K}$ such that $H_{J,K}$ is a Hadamard submatrix of $\mathcal{F}_m$. Suppose that $J^\prime\subseteq\set{0,1,\ldots,m-1}$ with $\abs{J^\prime}=\abs{K}$. If $\mathcal{P}_m(J^\prime)=\mathcal{P}_m(J)$, then $H_{J^\prime,K}$ is also Hadamard submatrix of $\mathcal{F}_m$. Alternatively, if $K^\prime\subseteq\set{0,1,\ldots,m-1}$ with $\abs{K^\prime}=\abs{J}$, then $\mathcal{P}_m(K^\prime)=\mathcal{P}_m(K)$ implies $H_{J,K^\prime}$ is also a Hadamard submatrix of $\mathcal{F}_m$.
\end{lem}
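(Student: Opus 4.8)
The plan is to reduce the Hadamard property of $H_{J,K}$ to a single divisibility criterion that depends on $J$ only through $\mathcal{P}_m(J)$, after which both halves of the lemma follow at once. First I would record the standard fact that for a square matrix whose entries all have unit modulus, pairwise orthogonality of the rows is equivalent to the matrix being Hadamard: each row of $H_{J,K}$ has squared norm $\abs{K}$, so orthogonality of the rows gives $H_{J,K}H_{J,K}^\ast=\abs{K}\cdot I$, and for a square matrix this forces $H_{J,K}^\ast H_{J,K}=\abs{K}\cdot I$ as well (since then $H_{J,K}^\ast=\abs{K}H_{J,K}^{-1}$). Thus $H_{J,K}$ is Hadamard if and only if $\sum_{k\in K}e^{2\pi i(j_1-j_2)k/m}=0$ for every pair of distinct $j_1,j_2\in J$.

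Next I would translate this orthogonality condition into the language of cyclotomic polynomials, exactly as in the proof of Theorem \ref{Ctheorem}. Writing $d=j_1-j_2$ and $s=\frac{m}{\gcd(m,d)}$, the sum $\sum_{k\in K}e^{2\pi idk/m}$ equals $K(e^{2\pi id/m})$, and $e^{2\pi id/m}$ is a primitive $s$-th root of unity. Since $K(z)$ has integer coefficients, $K(e^{2\pi id/m})=0$ holds if and only if the minimal polynomial $\Phi_s(z)$ divides $K(z)$. As $(j_1,j_2)$ ranges over distinct elements of $J$, the difference $d$ ranges over $\mathcal{D}(J)\setminus\set{0}$, and $s$ ranges over exactly $\mathcal{P}_m(J)\setminus\set{1}$. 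This yields the key equivalence:
\begin{center}
$H_{J,K}$ is Hadamard $\iff$ $\Phi_s(z)$ divides $K(z)$ for every $s\in\mathcal{P}_m(J)\setminus\set{1}$.
\end{center}

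Finally, the lemma is immediate. The right-hand condition refers to $J$ only through the set $\mathcal{P}_m(J)$; hence if $\mathcal{P}_m(J^\prime)=\mathcal{P}_m(J)$ and $\abs{J^\prime}=\abs{K}$, the same divisibility condition holds with $J^\prime$ in place of $J$, so $H_{J^\prime,K}$ is Hadamard. For the alternative statement I would note that $H_{K,J}=H_{J,K}^\top$ and that the transpose of a Hadamard matrix is Hadamard, so $H_{J,K}$ is Hadamard if and only if $H_{K,J}$ is; applying the equivalence with the roles of $J$ and $K$ interchanged shows $H_{J,K}$ is Hadamard if and only if $\Phi_s(z)$ divides $J(z)$ for every $s\in\mathcal{P}_m(K)\setminus\set{1}$, and then $\mathcal{P}_m(K^\prime)=\mathcal{P}_m(K)$ with $\abs{K^\prime}=\abs{J}$ gives that $H_{J,K^\prime}$ is Hadamard. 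The only point requiring care is the claim that, as $d$ ranges over the nonzero differences, $s$ ranges over \emph{all} of $\mathcal{P}_m(J)\setminus\set{1}$ and nothing more; this surjectivity is the definition of the primitive set together with the elementary observation that every nonzero $d$ satisfies $\abs{d}<m$ and hence yields $s\neq1$, so I expect no genuine obstacle beyond this bookkeeping.
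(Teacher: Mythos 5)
Your proof is correct and follows essentially the same route as the paper's: the key mechanism in both is that an orthogonality sum $\sum_{k\in K}e^{2\pi i dk/m}$ vanishes exactly when $\Phi_s(z)$ divides $K(z)$ (where $s=\frac{m}{\gcd(m,d)}$), a condition that depends on $J$ only through $\mathcal{P}_m(J)\setminus\set{1}$. Your departures are cosmetic: you package this as an explicit biconditional (and spell out why row orthogonality alone suffices for a square unit-modulus matrix, which the paper leaves implicit), and you handle the second half via transposition $H_{K,J}=H_{J,K}^\top$, where the paper invokes a symmetric column-orthogonality argument.
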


\begin{proof}Let $j_1^\prime,j_2^\prime\in J^\prime$ be distinct, and let $d^\prime=j_1^\prime-j_2^\prime$. Then $d^\prime\in\mathcal{D}(J^\prime)$. Let $s=\frac{m}{\gcd(m,d^\prime)}$. Then $s\in\mathcal{P}_m(J^\prime)$, and so $s\in\mathcal{P}_m(J)$. It follows that there exist $j_1,j_2\in J$ such that $s=\frac{m}{\gcd(m,j_1-j_2)}$. Since $H_{J,K}$ is Hadamard, we have
    $$\sum_{k\in K}e^{2\pi i(j_1-j_2)k/m}=0.$$
    Thus, $e^{2\pi i(j_1-j_2)/m}$ is a root of $K(z)$. By construction, it is a primitive $s$-th root, and so the cyclotomic polynomial $\Phi_s(z)$ divides $K(z)$. Then because $e^{2\pi i(j_1^\prime-j_2^\prime)/m}$ is a primitive $s$-th root of unity, it is also a root of $\Phi_s(z)$ and hence a root of $K(z)$. Therefore,
    $$\sum_{k\in K}e^{2\pi i(j_1^\prime-j_2^\prime)k/m}=0.$$
    This shows that $H_{J^\prime,K}$ is Hadamard.
    By a symmetric argument, if $K^\prime\subseteq\set{0,1,\ldots,m-1}$ with $\abs{K^\prime}=\abs{J}$ and $\mathcal{P}_m(K^\prime)=\mathcal{P}_m(K)$, then $H_{J,K^\prime}$ is a Hadamard submatrix of $\mathcal{F}_m$.
\end{proof}

\begin{thm}\label{equivtheorem}Let $J,J^\prime,K,K^\prime\subseteq\set{0,1,\ldots,m-1}$ with $\abs{J}=\abs{J^\prime}=\abs{K}=\abs{K^\prime}$, and let $H_{J,K}$ be a Hadamard submatrix of $\mathcal{F}_m$. If $\mathcal{P}_m(J^\prime)=\mathcal{P}_m(J)$ and $\mathcal{P}_m(K^\prime)=\mathcal{P}_m(K)$, then $H_{J^\prime,K^\prime}$ is also a Hadamard submatrix of $\mathcal{F}_m$. Alternatively, if $\mathcal{P}_m(J^\prime)=\mathcal{P}_m(K)$ and $\mathcal{P}_m(K^\prime)=\mathcal{P}_m(J)$, then $H_{J^\prime,K^\prime}$ is also a Hadamard submatrix of $\mathcal{F}_m$.
\end{thm}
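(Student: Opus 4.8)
The plan is to derive both statements from Lemma \ref{tielemma} by applying it twice, changing one index set at a time while keeping the other fixed. Throughout, the cardinality hypotheses of Lemma \ref{tielemma} are automatically satisfied, since $\abs{J}=\abs{J^\prime}=\abs{K}=\abs{K^\prime}$ by assumption, so no additional bookkeeping on sizes is required at any step.

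For the first statement I would begin with the Hadamard submatrix $H_{J,K}$ and first replace the rows. Because $\mathcal{P}_m(J^\prime)=\mathcal{P}_m(J)$ and $\abs{J^\prime}=\abs{K}$, the row version of Lemma \ref{tielemma} gives that $H_{J^\prime,K}$ is Hadamard. I then treat $H_{J^\prime,K}$ as the base Hadamard submatrix and replace the columns: since $\mathcal{P}_m(K^\prime)=\mathcal{P}_m(K)$ and $\abs{K^\prime}=\abs{J^\prime}$, the column version of Lemma \ref{tielemma} yields that $H_{J^\prime,K^\prime}$ is Hadamard. Thus the first statement is simply the composition of the two halves of Lemma \ref{tielemma}.

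For the alternative statement the index sets are crossed, so Lemma \ref{tielemma} does not apply directly: we are given $\mathcal{P}_m(J^\prime)=\mathcal{P}_m(K)$ rather than $\mathcal{P}_m(J^\prime)=\mathcal{P}_m(J)$. The key observation is that $\mathcal{F}_m$ is symmetric, so $H_{K,J}=\p{H_{J,K}}^T$, and the transpose of a Hadamard matrix is Hadamard, since the rows of the transpose are precisely the mutually orthogonal columns of the original and the entries still have unit modulus. Hence $H_{K,J}$ is Hadamard. I would then invoke the already-established first statement with base submatrix $H_{K,J}$, i.e. with rows $K$ and columns $J$: the new rows $J^\prime$ satisfy $\mathcal{P}_m(J^\prime)=\mathcal{P}_m(K)$ and the new columns $K^\prime$ satisfy $\mathcal{P}_m(K^\prime)=\mathcal{P}_m(J)$, which are exactly the hypotheses of the first statement relative to $H_{K,J}$. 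The conclusion is that $H_{J^\prime,K^\prime}$ is Hadamard.

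I anticipate no genuine obstacle here: the substance is entirely contained in Lemma \ref{tielemma}, and the only points requiring a moment's care are the transpose-invariance of the Hadamard property used in the alternative case and the verification that the equal-cardinality hypothesis survives each substitution. I would therefore state the transpose symmetry explicitly — it is the same symmetry of $\mathcal{F}_m$ already invoked in the worked examples — so that the reduction of the alternative statement to the first is airtight.
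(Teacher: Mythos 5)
Your proof is correct and takes essentially the same approach as the paper: Lemma \ref{tielemma} applied twice (rows, then columns) for the first statement, and the symmetry $\mathcal{F}_m^T=\mathcal{F}_m$ together with transpose-invariance of the Hadamard property to handle the crossed case. The only cosmetic difference is that you transpose at the outset (passing from $H_{J,K}$ to $H_{K,J}$ and then invoking the first statement), whereas the paper applies the lemma twice first to obtain $H_{K^\prime,J^\prime}$ and transposes at the end; the ingredients and logic are identical.
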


\begin{proof}Suppose $\mathcal{P}_m(J^\prime)=\mathcal{P}_m(J)$ and $\mathcal{P}_m(K^\prime)=\mathcal{P}_m(K)$. By Lemma \ref{tielemma}, because $H_{J,K}$ is Hadamard, $H_{J^\prime,K}$ is Hadamard. Then because $H_{J^\prime,K}$ is Hadamard, $H_{J^\prime,K^\prime}$ is Hadamard. Alternatively, suppose $\mathcal{P}_m(J^\prime)=\mathcal{P}_m(K)$ and $\mathcal{P}_m(K^\prime)=\mathcal{P}_m(J)$. By Lemma \ref{tielemma}, since $H_{J,K}$ is Hadamard, $H_{J,J^\prime}$ is Hadamard, and because $H_{J,J^\prime}$ is Hadamard, $H_{K^\prime,J^\prime}$ is a Hadamard submatrix of $\mathcal{F}_m$. Since the transpose of a Hadamard matrix is Hadamard, it follows that by selecting rows $J^\prime$ and columns $K^\prime$ from $\mathcal{F}_m^T$, the resulting submatrix is Hadamard, and since $\mathcal{F}_m^T=\mathcal{F}_m$, the proof is complete.
\end{proof}

Theorem \ref{equivtheorem} shows that for a given dimension $m$ and subdimension $\abs{J}=\abs{K}=n$, whether the submatrix $H_{J,K}$ of $\mathcal{F}_m$ is Hadamard is determined exactly by whether the primitive sets $\mathcal{P}_m(J)$ and $\mathcal{P}_m(K)$ are compatible. We can represent this compatibility structure by a graph:

\begin{defn}Define the compatibility graph $G(m,n)$ as follows: The vertices of $G(m,n)$ are the primitive sets of the row selections $J$ for which there is a Hadamard submatrix $H_{J,K}$ of $\mathcal{F}_m$:
    $$V(G(m,n))=\set{\mathcal{P}_m(J):H_{J,K}\text{ is an $n\times n$ Hadamard submatrix of }\mathcal{F}_m}.$$
    (By symmetry, this also includes the primitive sets of the column selections.)
    The edge set of $G(m,n)$ consists of edges connecting those vertices for which the primitive sets are compatible. That is,
    $$E(G(m,n))=\set{\set{\mathcal{P}_m(J),\mathcal{P}_m(K)}:H_{J,K}\text{ is an $n\times n$ Hadamard submatrix of $\mathcal{F}_m$}}.$$
    The graph is undirected but may contain loops. The graph will be empty if there are no Hadamard submatrices for a given $m$ and $n$.
\end{defn}

Note that since two primitive sets are either compatible or not, all $n\times n$ square submatrices of a Fourier matrix $\mathcal{F}_m$ may be partitioned into equivalence classes, with each equivalence class consisting of submatrices coming from row and column sets having the same two primitive sets. Each such equivalence class consists entirely of submatrices that are Hadamard or entirely of submatrices that are not Hadamard. The equivalence classes that do consist of Hadamard submatrices are then represented by the edges in the compatibility graph $G(m,n)$.

\begin{thm}\label{mutexcthm}Let $J,K\subseteq\set{0,1,\ldots,m-1}$ with $\abs{J}=\abs{K}$ be such that $H_{J,K}$ is a Hadamard submatrix of $\mathcal{F}_m$. Let $L\subseteq\set{0,1,\ldots,m-1}$ with $\abs{L}>\abs{J}$. Then $\mathcal{P}_m(L)\neq\mathcal{P}_m(J)$.
\end{thm}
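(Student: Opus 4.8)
The plan is to argue by contradiction: assume $\mathcal{P}_m(L)=\mathcal{P}_m(J)$ while $\abs{L}>\abs{J}$, and derive an impossible configuration of vectors. The guiding observation is that the orthogonality argument already used in the proof of Lemma \ref{tielemma} never actually requires the row set and the column set to have equal cardinality; equal cardinality is only invoked at the very end to declare the resulting square matrix Hadamard. I will run essentially that same argument with $L$ in the role of $J$ to conclude that the rows of the (generally non-square) matrix $H_{L,K}$ are pairwise orthogonal, and then finish with a dimension count bounding $\abs{L}$.

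First I would recall, exactly as in the proof of Theorem \ref{Ctheorem}, that because $H_{J,K}$ is Hadamard, every $s\in\mathcal{P}_m(J)\setminus\set{1}$ satisfies $\Phi_s(z)\mid K(z)$: a nonzero difference $j_1-j_2$ realizing such an $s$ gives two orthogonal rows, so $e^{2\pi i(j_1-j_2)/m}$, a primitive $s$-th root of unity, is a root of $K(z)$, forcing $\Phi_s(z)\mid K(z)$. Next, take any two distinct elements $\ell_1,\ell_2\in L$ and set $s=m/\gcd(m,\ell_1-\ell_2)$. Since $\ell_1\neq\ell_2$ we have $s\neq1$, and by hypothesis $s\in\mathcal{P}_m(L)\setminus\set{1}=\mathcal{P}_m(J)\setminus\set{1}$. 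Because $\Phi_s(z)\mid K(z)$ and $e^{2\pi i(\ell_1-\ell_2)/m}$ is a primitive $s$-th root of unity, it is a root of $K(z)$, whence $\sum_{k\in K}e^{2\pi i(\ell_1-\ell_2)k/m}=0$. That is, the rows of $H_{L,K}$ indexed by $\ell_1$ and $\ell_2$ are orthogonal.

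Finally I would close with linear algebra. The rows of $H_{L,K}$ are vectors in $\mathbb{C}^{\abs{K}}$, each of norm $\sqrt{\abs{K}}\neq0$ since every entry has unit modulus, and by the previous step they are pairwise orthogonal. Pairwise orthogonal nonzero vectors are linearly independent, so there can be at most $\abs{K}$ of them; hence $\abs{L}\leq\abs{K}=\abs{J}$, contradicting $\abs{L}>\abs{J}$.

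I expect the main point to get right — and the only real obstacle — is recognizing that the equal-cardinality hypothesis of Lemma \ref{tielemma} is inessential to its orthogonality conclusion, so that the same derivation applies verbatim to the larger set $L$. Once the rows of $H_{L,K}$ are known to be pairwise orthogonal, the bound $\abs{L}\leq\abs{K}$ is immediate from the dimension of the ambient space, and no delicate cyclotomic or $p$-adic estimates are needed.
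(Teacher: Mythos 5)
Your proof is correct and is essentially the paper's own argument: the paper likewise assumes $\mathcal{P}_m(L)=\mathcal{P}_m(J)$, notes that ``by reasoning akin to the proof of Lemma \ref{tielemma}'' the rows of the non-square matrix $H_{L,K}$ are mutually orthogonal, and concludes with the same dimension count that there cannot be $\abs{L}$ pairwise orthogonal nonzero vectors in $\mathbb{C}^{\abs{K}}$ when $\abs{K}<\abs{L}$. The only difference is that you spell out the cyclotomic divisibility step $\Phi_s(z)\mid K(z)$ explicitly, which the paper leaves as a citation to Lemma \ref{tielemma}.
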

\begin{proof}Assume, for the sake of contradiction, that $\mathcal{P}_m(L)=\mathcal{P}_m(J)$. Consider the (non-square) submatrix $H_{L,K}$. By reasoning akin to the proof of Lemma \ref{tielemma}, the rows of $H_{L,K}$ are mutually orthogonal. This is a contradiction, because there cannot be $\abs{L}$ orthogonal vectors of length $\abs{K}$ when $\abs{K}<\abs{L}$.
\end{proof}

\begin{cor}\label{DisjVertForN}If $n\neq n^\prime$, then $V(G(m,n))\cap V(G(m,n^\prime))=\varnothing$.
\end{cor}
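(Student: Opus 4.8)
The plan is to derive this as an immediate consequence of Theorem \ref{mutexcthm}, arguing by contradiction. Suppose some primitive set $P$ lies in $V(G(m,n))\cap V(G(m,n'))$ with $n\neq n'$. Without loss of generality I would assume $n<n'$.

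First I would unpack what the two memberships mean according to the definition of the compatibility graph. Since $P\in V(G(m,n))$, there must exist a row selection $J$ with $\abs{J}=n$ and $\mathcal{P}_m(J)=P$, together with a column selection $K$, $\abs{K}=n$, such that $H_{J,K}$ is an $n\times n$ Hadamard submatrix of $\mathcal{F}_m$. Similarly, $P\in V(G(m,n'))$ furnishes a set $L$ with $\abs{L}=n'$ and $\mathcal{P}_m(L)=P$. I would note that on the $n'$-side only the primitive-set datum is needed; the accompanying Hadamard witness guaranteed by the vertex-set definition plays no role.

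The key step is then to apply Theorem \ref{mutexcthm} to the triple $J$, $K$, $L$. Its hypotheses are met: $H_{J,K}$ is Hadamard with $\abs{J}=\abs{K}$, and $\abs{L}=n'>n=\abs{J}$. The theorem concludes $\mathcal{P}_m(L)\neq\mathcal{P}_m(J)$. But $\mathcal{P}_m(L)=P=\mathcal{P}_m(J)$, a contradiction. Hence no such $P$ exists and the vertex sets are disjoint.

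I do not anticipate a genuine obstacle, since all the content is carried by Theorem \ref{mutexcthm}. The one point deserving care is the asymmetry in how the two memberships are used: the smaller dimension must supply an actual Hadamard submatrix (the vector-counting argument behind Theorem \ref{mutexcthm} needs $\abs{K}=n$ orthogonality constraints), whereas the larger dimension contributes only a set $L$ of the requisite cardinality sharing the primitive set $P$. Assigning $n$ to the role of ``$J$'' and $n'$ to the role of ``$L$'' is precisely what orients the comparison $\abs{L}>\abs{J}$ in the direction the theorem requires.
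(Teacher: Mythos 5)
Your proposal is correct and is essentially the paper's own proof: both reduce the statement to a single application of Theorem \ref{mutexcthm}, using the Hadamard witness $H_{J,K}$ on the smaller side $n$ and only the cardinality-and-primitive-set data of $L$ on the larger side $n^\prime$. The only difference is cosmetic framing (you argue by contradiction, the paper argues directly that $P\not\in V(G(m,n^\prime))$), so there is nothing to add.
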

\begin{proof}Without loss of generality, assume $n<n^\prime$. Let $P\in V(G(m,n))$. Then $P=\mathcal{P}_m(J)$ for some $J,K\subseteq\set{0,1,\ldots,m-1}$ with $\abs{J}=\abs{K}=n$ and $H_{J,K}$ a Hadamard submatrix of $\mathcal{F}_m$. By Theorem \ref{mutexcthm}, $P\neq\mathcal{P}_m(L)$ for any $L\subseteq\set{0,1\ldots,m-1}$ of cardinality larger than $n$, and so certainly $P\not\in V(G(m,n^\prime))$.\end{proof}

In other words, for a given $m$, an $m$-th primitive set can correspond to row or column sets of Hadamard submatrices of $\mathcal{F}_m$ of at most one particular size. Moreover, once a set $J$ is the row (or column) set of a Hadamard submatrix of $\mathcal{F}_m$, no subset of $\set{0,1,\ldots,m-1}$ of greater cardinality can be found that has the same $m$-th primitive set as $J$, even if it is not the row set of a Hadamard submatrix.

We now show that primitive sets cannot be shared among Hadamard submatrices of different sizes even when using different $m$.

\begin{lem}\label{sameprimitive}Let $J\subseteq\set{0,1,\ldots,m-1}$ and let $v\in\mathbb{N}$. Then $\mathcal{P}_{vm}(vJ)=\mathcal{P}_m(J)$.
\end{lem}
\begin{proof}Let $s\in\mathcal{P}_{vm}(vJ)$. Then for some $j_1,j_2\in J$, $s=\frac{vm}{\gcd(vm,vj_1-vj_2)}=\frac{vm}{v\gcd(m,j_1-j_2)}=\frac{m}{\gcd(m,j_1-j_2)}\in\mathcal{P}_m(J)$. Let $s\in\mathcal{P}_{m}(J)$. Then the same chain of equalities in reverse shows that $s\in\mathcal{P}_{vm}(vJ)$.
\end{proof}

\begin{lem}\label{DivVertCont}Let $v\in\mathbb{N}$. Then $V(G(m,n))\subseteq V(G(vm,n))$.
\end{lem}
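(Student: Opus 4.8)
The plan is to start from an arbitrary vertex $P\in V(G(m,n))$ and explicitly exhibit an $n\times n$ Hadamard submatrix of $\mathcal{F}_{vm}$ whose row set has $vm$-th primitive set equal to $P$. By the definition of the vertex set, $P=\mathcal{P}_m(J)$ for some $J,K\subseteq\set{0,1,\ldots,m-1}$ with $\abs{J}=\abs{K}=n$ and $H_{J,K}$ an $n\times n$ Hadamard submatrix of $\mathcal{F}_m$. The pair of index sets I would use inside $\set{0,1,\ldots,vm-1}$ is $vJ=\set{vj:j\in J}$ together with $K$ itself. Here $vJ\subseteq\set{0,1,\ldots,vm-1}$ because $vj\le v(m-1)<vm$, and $K\subseteq\set{0,1,\ldots,m-1}\subseteq\set{0,1,\ldots,vm-1}$, so both are legitimate selections for $\mathcal{F}_{vm}$.

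The crux of the argument is the observation that $H_{vJ,K}$, viewed as a submatrix of $\mathcal{F}_{vm}$, is entrywise identical to $H_{J,K}$ viewed as a submatrix of $\mathcal{F}_m$. Since multiplication by $v>0$ preserves the least-to-greatest ordering, the rows $vJ$ are listed as $(vj_1,\ldots,vj_n)$ and the $(a,b)$th entry of $H_{vJ,K}$ is $e^{2\pi i(vj_a)k_b/(vm)}=e^{2\pi ij_ak_b/m}$, which is exactly the $(a,b)$th entry of $H_{J,K}$. Because $H_{J,K}$ is Hadamard, $H_{vJ,K}$ is therefore Hadamard as well.

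It then remains only to track the primitive set, which is handled for free by Lemma \ref{sameprimitive}: $\mathcal{P}_{vm}(vJ)=\mathcal{P}_m(J)=P$. Since $\abs{vJ}=\abs{K}=n$ and $H_{vJ,K}$ is an $n\times n$ Hadamard submatrix of $\mathcal{F}_{vm}$, the set $P=\mathcal{P}_{vm}(vJ)$ is by definition a vertex of $G(vm,n)$. As $P$ was an arbitrary element of $V(G(m,n))$, this establishes $V(G(m,n))\subseteq V(G(vm,n))$.

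I do not anticipate a genuine obstacle; the only subtlety is choosing which set to scale. Scaling the rows $J$ by $v$ while leaving the columns $K$ fixed is precisely what makes the exponents $(vj)k/(vm)$ collapse back to $jk/m$, whereas scaling both $J$ and $K$ would leave a surviving factor of $v$ in the exponent and would not in general preserve orthogonality. With this choice, the entry computation does the real work and Lemma \ref{sameprimitive} supplies the bookkeeping on primitive sets.
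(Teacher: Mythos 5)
Your proof is correct and is essentially identical to the paper's own argument: both scale the row set to $vJ$ while keeping $K$ fixed, observe that $H_{vJ,K,vm}$ is entrywise equal to $H_{J,K,m}$, and invoke Lemma \ref{sameprimitive} to conclude $\mathcal{P}_{vm}(vJ)=\mathcal{P}_m(J)$. Your added remarks on ordering and on why scaling both sets would fail are fine but not needed.
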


\begin{proof}Let $P\in V(G(m,n))$. Then there exist $J,K\subseteq\set{0,1,\ldots, m-1}$ with $\abs{J}=\abs{K}=n$ such that $H_{J,K,m}$ is a Hadamard submatrix of $\mathcal{F}_m$ and $P=\mathcal{P}_m(J)$. Observe that $$H_{vJ,K,vm}=\p{e^{2\pi ivjk/(vm)}}_{j\in J,k\in K}=\p{e^{2\pi ijk/m}}_{j\in J,k\in K}=H_{J,K,m}.$$ Thus, $H_{vJ,K,vm}$ is an $n\times n$ Hadamard submatrix of $\mathcal{F}_{vm}$. Hence, $\mathcal{P}_{vm}(vJ)\in V(G(vm,n))$. By Lemma \ref{sameprimitive}, $\mathcal{P}_{vm}(vJ)=\mathcal{P}_m(J)$. Thus, $P\in V(G(vm,n))$.
\end{proof}

\begin{lem}\label{PrimDifferent}\sloppy Suppose $J,K\subseteq\set{0,1,\ldots,m-1}$ with $\abs{J}=\abs{K}=n$, and $J^\prime,K^\prime\subseteq\set{0,1,\ldots,m^\prime-1}$ with $\abs{J^\prime}=\abs{K^\prime}=n^\prime$. Suppose $H_{J,K,m}$ and $H_{J^\prime,K^\prime,m^\prime}$ are Hadamard submatrices of $\mathcal{F}_m$ and $\mathcal{F}_{m^\prime}$, respectively. If $n\neq n^\prime$, then $\mathcal{P}_m(J)\neq\mathcal{P}_{m^\prime}(J^\prime)$.
\end{lem}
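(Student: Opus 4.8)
The plan is to reduce the problem to a single common modulus and then invoke Corollary \ref{DisjVertForN}. The conceptual obstacle is only that $\mathcal{P}_m(J)$ and $\mathcal{P}_{m'}(J')$ are computed with respect to \emph{different} Fourier matrices, so the fixed-modulus disjointness result does not apply directly. The fix is to observe that Lemmas \ref{sameprimitive} and \ref{DivVertCont} let us reinterpret each primitive set as a vertex of a compatibility graph for the common modulus $M=mm'$ without altering the set itself.

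Concretely, I would set $M=mm'$. Since $H_{J,K,m}$ is Hadamard, $\mathcal{P}_m(J)\in V(G(m,n))$, and applying Lemma \ref{DivVertCont} with $v=m'$ yields $\mathcal{P}_m(J)\in V(G(M,n))$. Symmetrically, since $H_{J',K',m'}$ is Hadamard, $\mathcal{P}_{m'}(J')\in V(G(m',n'))$, and applying Lemma \ref{DivVertCont} with $v=m$ yields $\mathcal{P}_{m'}(J')\in V(G(M,n'))$. The reason this works is Lemma \ref{sameprimitive}: scaling the row set and the modulus by the same factor leaves the primitive set numerically identical, so these membership claims genuinely concern the \emph{same} sets $\mathcal{P}_m(J)$ and $\mathcal{P}_{m'}(J')$, now viewed inside graphs of common dimension $M$.

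Finally, suppose toward a contradiction that $\mathcal{P}_m(J)=\mathcal{P}_{m'}(J')$. Then this common set lies in $V(G(M,n))\cap V(G(M,n'))$. But $n\neq n'$, so Corollary \ref{DisjVertForN} forces $V(G(M,n))\cap V(G(M,n'))=\varnothing$, a contradiction; hence $\mathcal{P}_m(J)\neq\mathcal{P}_{m'}(J')$. The only step requiring any care—and the closest thing to an obstacle—is confirming that the two applications of Lemma \ref{DivVertCont} land in the same modulus: choosing $v=m'$ in the first and $v=m$ in the second sends both into $G(M,\cdot)$, after which the fixed-modulus disjointness result closes the argument. I expect no serious computation beyond this bookkeeping.
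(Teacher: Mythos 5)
Your proposal is correct and is essentially identical to the paper's own proof: both embed $\mathcal{P}_m(J)$ and $\mathcal{P}_{m'}(J')$ into $V(G(mm',n))$ and $V(G(mm',n'))$ via Lemma \ref{DivVertCont} and then conclude by the disjointness of these vertex sets from Corollary \ref{DisjVertForN}. Your extra remark that Lemma \ref{sameprimitive} guarantees the primitive sets are numerically unchanged is exactly the mechanism already built into Lemma \ref{DivVertCont}, so no new ideas are needed beyond the paper's argument.
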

\begin{proof}\sloppy
    Suppose $n\neq n^\prime$. By Lemma \ref{DivVertCont}, $\mathcal{P}_{m}(J)\in V(G(mm^\prime,n))$ and $\mathcal{P}_{m^\prime}(J^\prime)\in V(G(mm^\prime,n^\prime))$. By Corollary \ref{DisjVertForN}, $V(G(mm^\prime,n))\cap V(G(mm^\prime,n^\prime))=\varnothing$. Hence, $P_{m}(J)\neq P_{m^\prime}(J^\prime)$. 
\end{proof}

\begin{thm}\label{PrimThm}
    If $n\neq n^\prime$, then $V(G(m,n))\cap V(G(m^\prime,n^\prime))=\varnothing$ for any $m,m^\prime\in\mathbb{N}$.
\end{thm}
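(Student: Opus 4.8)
The plan is to recognize that this theorem is essentially a restatement of Lemma \ref{PrimDifferent} in the language of the compatibility graphs, so I would prove it by a short argument by contradiction that unwinds the definition of the vertex sets and then invokes that lemma directly.

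First I would suppose toward a contradiction that $V(G(m,n))\cap V(G(m^\prime,n^\prime))$ is nonempty, and fix a primitive set $P$ belonging to both vertex sets. By the definition of the vertex set of a compatibility graph, membership $P\in V(G(m,n))$ means there exist $J,K\subseteq\set{0,1,\ldots,m-1}$ with $\abs{J}=\abs{K}=n$ such that $H_{J,K,m}$ is an $n\times n$ Hadamard submatrix of $\mathcal{F}_m$ and $P=\mathcal{P}_m(J)$. Symmetrically, $P\in V(G(m^\prime,n^\prime))$ yields $J^\prime,K^\prime\subseteq\set{0,1,\ldots,m^\prime-1}$ with $\abs{J^\prime}=\abs{K^\prime}=n^\prime$ such that $H_{J^\prime,K^\prime,m^\prime}$ is Hadamard and $P=\mathcal{P}_{m^\prime}(J^\prime)$.

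Then I would simply observe that these two realizations of $P$ force $\mathcal{P}_m(J)=P=\mathcal{P}_{m^\prime}(J^\prime)$, while the hypothesis $n\neq n^\prime$ places us precisely in the situation of Lemma \ref{PrimDifferent}, which asserts $\mathcal{P}_m(J)\neq\mathcal{P}_{m^\prime}(J^\prime)$. This contradiction shows the intersection must be empty, completing the proof.

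I do not anticipate any genuine obstacle here, since all of the substantive work has already been done in Lemma \ref{PrimDifferent} and the chain of results feeding it (Lemma \ref{sameprimitive}, Lemma \ref{DivVertCont}, and Corollary \ref{DisjVertForN}), which handle the cross-dimensional case by pulling both primitive sets into the common graph $G(mm^\prime,n)$ and $G(mm^\prime,n^\prime)$, where the sizes cannot coincide. The only point requiring care is matching the vertex-set definition precisely to the lemma's hypotheses: a vertex of $V(G(m,n))$ is by definition the $m$-th primitive set of a row (or, by symmetry, column) selection of an actual $n\times n$ Hadamard submatrix, which is exactly the configuration that the lemma takes as input.
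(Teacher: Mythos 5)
Your proof is correct and is exactly the paper's argument: the paper also derives Theorem \ref{PrimThm} as a direct consequence of Lemma \ref{PrimDifferent}, with your version merely spelling out the unwinding of the vertex-set definition and the contradiction. No gaps here.
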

\begin{proof}This is a direct consequence of Lemma \ref{PrimDifferent}.
\end{proof}

\textbf{Example: }Consider $m=21$, and take $J=\set{0,2,16}$ and $K=\set{0,7,14}$. The reader may confirm, or may check using Theorem \ref{3by3thm}, that $H_{J,K}$ is a Hadamard submatrix of $\mathcal{F}_{21}$. We have $\mathcal{D}(J)=\set{0,\pm2,\pm14,\pm16}$, and so $\mathcal{P}_{21}(J)=\set{1,3,21}$. We have $\mathcal{D}(K)=\set{0,\pm7,\pm14}$, and so $\mathcal{P}_{21}(K)=\set{1,3}$. From this, we know the following:
\begin{itemize}\item By Theorem \ref{mutexcthm}, there does not exist a subset $X$ of $\set{0,1,\ldots,20}$ of cardinality greater than 3 such that $\mathcal{P}_{21}(X)=\set{1,3,21}$ or $\mathcal{P}_{21}(X)=\set{1,3}$. 
\item However, Theorem \ref{mutexcthm} does not preclude there being a subset of lower cardinality having one of these as its primitive set. For example, $X=\set{0,7}$ has $\mathcal{D}(X)=\set{0,\pm7}$ and $\mathcal{P}_{21}(X)=\set{1,3}$.
\item By Theorem \ref{PrimThm}, any Hadamard submatrix of any Fourier matrix that has $\set{1,3}$ or $\set{1,3,21}$ as the primitive set of its row or column set must be of size $3\times 3$. For example, $\mathcal{P}_{12}(\set{0,4})=\set{1,3}$. Since $\set{0,4}$ is of size 2, we conclude it is not the row or column set of any Hadamard submatrix of $\mathcal{F}_{12}$.
\item However, Theorem \ref{PrimThm} does not preclude there being a Fourier matrix other than $\mathcal{F}_{21}$ with $\set{1,3}$ or $\set{1,3,21}$ as the primitive set of the row or column set of a Hadamard submatrix, so long as that Hadamard submatrix is of size $3\times 3$. For example, if we take $J=\set{0,4,8}$ and $K=\set{0,1,2}$, then $H_{J,K}$ is a $3\times3$ Hadamard submatrix of $\mathcal{F}_{12}$ with $\mathcal{P}_{12}(J)=\set{1,3}$. 
\end{itemize}

We note that Theorem $\ref{PrimThm}$ allows us to partition all finite subsets of $\mathbb{N}$ into equivalence classes, with two sets being equivalent if they are the primitive sets of the row or column sets of same-sized Hadamard submatrices of Fourier matrices (or are not the primitive sets of Hadamard submatrices of any size). Moreover, we may define a function $${\phi:\set{A\subset\mathbb{N}:\abs{A}<\infty}\to\mathbb{N}_0}$$ by $\phi(X)=n$ if $X$ is the primitive set of the row or column set of an $n\times n$ Hadamard submatrix of a Fourier matrix, and $\phi(X)=0$ otherwise. In the previous example, $\phi(\set{1,3})=\phi(\set{1,3,21})=3$.

\section{Characterizations of $G(m,2)$ and $G(m,3)$}

In the previous section, we showed that compatibility between the primitive sets of the selected rows and columns is what determines whether a submatrix of $\mathcal{F}_m$ is Hadamard, and we represented this structure by a graph $G(m,n)$. An $n\times n$ submatrix $H_{J,K}$ of $\mathcal{F}_m$ is Hadamard if and only if $\mathcal{P}_m(J)$ and $\mathcal{P}_m(K)$ are vertices in $G(m,n)$ with an edge between them. In this section, we give some results that facilitate construction of these graphs when $n=2$ or $n=3$.

Before giving results that work for $G(m,2)$ and $G(m,3)$ in general, we give a couple results for special cases of $n=2$ where the graph is very simply described. The following result gives a complete characterization of $G(2^q,2)$, where $q\in\mathbb{N}$:

\begin{thm}\label{2by2thm}
    Let $J,K\subseteq\{0,1,\ldots,2^q-1\}$, $\abs{J}=\abs{K}=2$. Then $H_{J,K}$ is Hadamard if and only if
     there exist $\alpha,\beta\in\mathbb{N}_0$ with $\alpha+\beta=q-1$ such that $\mathcal P_{2^q}(J) = \{ 1 , 2 ^{q-\alpha} \} $ and $\mathcal P_{2^q}(K) = \{ 1 , 2 ^{q-\beta} \} $.
\end{thm}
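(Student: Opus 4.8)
The plan is to reduce the Hadamard condition to a single statement about $2$-adic valuations and then translate that statement into the language of primitive sets. Write $J=\set{j_1,j_2}$ and $K=\set{k_1,k_2}$ with $j_1<j_2$ and $k_1<k_2$, and set $d_J=j_2-j_1$ and $d_K=k_2-k_1$, so that $0<d_J,d_K<2^q$. Since $H_{J,K}$ is a $2\times2$ matrix of unimodular entries, it is Hadamard exactly when its two rows are orthogonal. Computing that inner product gives $e^{2\pi i(j_1-j_2)k_1/2^q}+e^{2\pi i(j_1-j_2)k_2/2^q}$, and setting this to zero collapses the condition to $e^{2\pi i d_J d_K/2^q}=-1$. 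This holds if and only if $\frac{2d_Jd_K}{2^q}=\frac{d_Jd_K}{2^{q-1}}$ is an odd integer, i.e., if and only if $\nu_2(d_J)+\nu_2(d_K)=q-1$. This is the whole analytic content of the theorem; everything else is a translation.

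Next I would compute the primitive sets directly. For a two-element set $J$ we have $\mathcal{D}(J)=\set{0,\pm d_J}$, so $\mathcal{P}_{2^q}(J)=\set{1,\frac{2^q}{\gcd(2^q,d_J)}}$. Because $0<d_J<2^q$ forces $\nu_2(d_J)<q$, we get $\gcd(2^q,d_J)=2^{\nu_2(d_J)}$ and hence $\mathcal{P}_{2^q}(J)=\set{1,2^{q-\nu_2(d_J)}}$. In particular this is genuinely a two-element set, and the exponent $q-\nu_2(d_J)$ recovers $\nu_2(d_J)$ uniquely; the identical computation gives $\mathcal{P}_{2^q}(K)=\set{1,2^{q-\nu_2(d_K)}}$. (This is consistent with Theorem \ref{equivtheorem}, which guarantees in advance that Hadamard-ness for a fixed size depends only on these primitive sets.)

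Finally I would assemble the two directions. Setting $\alpha=\nu_2(d_J)$ and $\beta=\nu_2(d_K)$, the Hadamard condition $\nu_2(d_J)+\nu_2(d_K)=q-1$ becomes $\alpha+\beta=q-1$, while the primitive-set computation yields $\mathcal{P}_{2^q}(J)=\set{1,2^{q-\alpha}}$ and $\mathcal{P}_{2^q}(K)=\set{1,2^{q-\beta}}$, establishing the forward implication. Conversely, given $\alpha,\beta\in\mathbb{N}_0$ with $\alpha+\beta=q-1$ and primitive sets of the stated form, the uniqueness of the exponent forces $\nu_2(d_J)=\alpha$ and $\nu_2(d_K)=\beta$, so $\nu_2(d_J)+\nu_2(d_K)=q-1$ and $H_{J,K}$ is Hadamard. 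There is no deep obstacle here; the two points needing care are the bookkeeping that converts $e^{2\pi i d_Jd_K/2^q}=-1$ into the valuation equation, and verifying that $\alpha+\beta=q-1$ with $\alpha,\beta\ge0$ automatically forces $0\le\alpha,\beta\le q-1$, so that both $2^{q-\alpha}$ and $2^{q-\beta}$ exceed $1$ and the primitive sets are well-formed two-element sets.
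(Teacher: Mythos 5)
Your proposal is correct and follows essentially the same route as the paper's proof: reduce the $2\times2$ Hadamard condition to orthogonality of the two rows, translate $e^{2\pi i d_J d_K/2^q}=-1$ into the statement that the $2$-adic valuations of the differences sum to $q-1$, and then read off the primitive sets from $\gcd(2^q,d)$. The only differences are cosmetic: the paper first shifts so that $j_1=k_1=0$ and writes $j_2=2^\alpha y_j$, $k_2=2^\beta y_k$ with $y_j,y_k$ odd, whereas you work directly with the differences $d_J,d_K$ and phrase the same decomposition in terms of $\nu_2$.
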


\begin{proof}
    Suppose $H_{J,K}$ is Hadamard. Because $H_{J,K}$ has mutually orthogonal rows, for any $d\in\mathcal{D}(J)\setminus\set{0}$, $$ \sum_{k \in K} e^{\frac{2\pi i d k}{2^q}} = 0.$$ 
		Without loss of generality, we may assume that $j_1 = k_1 = 0$, so that $\mathcal{D}(J)=\{0,j_2,-j_2\}$ and $\mathcal{D}(K)=\{0,k_2,-k_2\}$. Therefore, taking $d=j_2$, $$0 = 1 +  e^{\frac{2\pi i j_2 k_2}{2^q}}.$$ 
    Therefore, $\ds e^{\frac{2\pi i j_2 k_2}{2^q}} = -1$. 
    This occurs when $\frac{2 j_2 k_2}{2^q} \in \mathbb Z_{\text{odd}}$.
    Thus, $j_2 k_2 = y2^{q-1} , y \in \mathbb Z_{\text{odd}}$.
    Therefore, there exist $\alpha,\beta\in\mathbb{N}_0$, $\alpha+\beta=q-1$, and odd integers $y_j$ and $y_k$, such that $j_2=2^\alpha y_j$ and $k_2=2^{\beta}y_k$.
		Hence, \begin{align*}\frac{2^q}{\gcd(2^q,d)}&=\frac{2^q}{\gcd(2^q,j_2)}\\
		&=\frac{2^q}{\gcd(2^q,2^\alpha y_j)}\\
		&=\frac{2^q}{2^\alpha}\\
		&=2^{q-\alpha}\in\mathcal{P}_{2^q}(J).\end{align*}
    Likewise, $2^{q-\beta}\in\mathcal{P}_{2^q}(K)$. Taking $d=-j_2$ and $d=-k_2$ also results in $2^{q-\alpha}$ and $2^{q-\beta}$, respectively. Finally, taking $d=0$ shows that $1\in\mathcal{P}(J)$ and $1\in\mathcal{P}(K)$ as usual.
    Therefore, $\mathcal P_{2^q}(J) = \{ 1 , 2^{q-\alpha} \}$, and $\mathcal P_{2^q}(K) = \{ 1 , 2^{q-\beta} \}$.
		
		Conversely, suppose there exist $\alpha,\beta\in\mathbb{N}_0$, $\alpha+\beta=q-1$, such that $\mathcal{P}_{2^q}(J)=\{1,2^{q-\alpha}\}$ and $\mathcal{P}_{2^q}(K)=\{1,2^{q-\beta}\}$. Without loss of generality, we may assume $J=\{0,j_2\}$ and $K=\{0,k_2\}$. It follows that
		$2^{q-\alpha}=\frac{2^q}{\gcd(2^q,j_2)}$, implying that $\gcd(2^q,j_2)=2^{\alpha}$. Thus, $j_2=2^\alpha y_j$, where $y_j$ is odd. Similarly, $k_2=2^{\beta} y_k$, where $y_k$ is odd. Note that
		\begin{align*}e^{\frac{2\pi i(j_2-j_1)k_1}{2^q}}+e^{\frac{2\pi i(j_2-j_1)k_2}{2^q}}&=1+e^{\frac{2\pi i j_2k_2}{2^q}}\\
		&=1+e^{\frac{2\pi i 2^{\alpha+\beta}y_jy_k}{2^q}}\\
		&=1+e^{\pi i y_jy_k}\\
		&=1-1=0.\end{align*}
		Hence, the two rows of $H_{J,K}$ are orthogonal, implying that $H_{J,K}$ is Hadamard.
		
\end{proof}

\begin{cor}
    Let $q \in \mathbb{N}$. Then $|V(G(2^q,2))| = q$ and $|E(G(2^q,2))| = \lceil{\frac{q}{2}}\rceil.$
\end{cor}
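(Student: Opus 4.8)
The plan is to read the vertex set and edge set of $G(2^q,2)$ directly off of Theorem \ref{2by2thm} and then carry out a short count, with the only real care needed in the treatment of loops.

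First I would determine the vertices. Theorem \ref{2by2thm} says that $\mathcal{P}_{2^q}(J)$ arises as the row primitive set of a $2\times2$ Hadamard submatrix of $\mathcal{F}_{2^q}$ exactly when it has the form $\set{1,2^{q-\alpha}}$ for some $\alpha\in\mathbb{N}_0$ admitting a complement $\beta\in\mathbb{N}_0$ with $\alpha+\beta=q-1$. Since $\beta\geq0$ forces $0\leq\alpha\leq q-1$, the exponent $q-\alpha$ ranges over $1,2,\ldots,q$, so the only candidate vertices are the $q$ distinct sets $\set{1,2^j}$, $j=1,\ldots,q$. I would then confirm that each is actually attained: the choice $J=\set{0,2^{q-j}}$ gives $\mathcal{P}_{2^q}(J)=\set{1,2^j}$, and pairing it with $K=\set{0,2^{j-1}}$ (the complementary exponent) produces a Hadamard submatrix by Theorem \ref{2by2thm}. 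Hence $\abs{V(G(2^q,2))}=q$.

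Next I would describe the edges. Relabel each vertex $\set{1,2^{q-\alpha}}$ by $a:=q-\alpha\in\set{1,\ldots,q}$. Under this relabeling the compatibility condition $\alpha+\beta=q-1$ between a row primitive set and a column primitive set becomes simply $a+b=q+1$, where $b=q-\beta$ is the label of the column vertex. Thus $G(2^q,2)$ is exactly the graph on $\set{1,\ldots,q}$ in which $a$ and $b$ are joined precisely when $a+b=q+1$, with the understanding that $a=b$ produces a loop.

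Finally I would count. Every $a\in\set{1,\ldots,q}$ has a unique partner $b=q+1-a$, itself in $\set{1,\ldots,q}$, so there are $q$ ordered incidences; a loop occurs only at $a=b=\frac{q+1}{2}$, which is an integer in range exactly when $q$ is odd. For even $q$ there are no loops and the $q$ incidences pair into $q/2$ edges; for odd $q$ there is one loop and the remaining $q-1$ incidences give $(q-1)/2$ ordinary edges, for a total of $(q+1)/2$. In either parity this equals $\lceil q/2\rceil$, as claimed. I expect the loop bookkeeping to be the only genuine subtlety: because $E(G(2^q,2))$ is a set of unordered pairs $\set{\mathcal{P}_m(J),\mathcal{P}_m(K)}$, the self-compatible middle vertex in the odd case must be counted as contributing a single loop, and one must check that $\set{1,2^{(q+1)/2}}$ really does satisfy $\alpha+\beta=q-1$ with $\alpha=\beta$; everything else is routine verification that the stated $J$ and $K$ realize each primitive set.
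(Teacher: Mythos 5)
Your proposal is correct and follows essentially the same route as the paper: both read the vertex set off Theorem \ref{2by2thm}, realize each candidate primitive set $\set{1,2^j}$ explicitly via $J=\set{0,2^{q-j}}$ paired with a complementary $K$, and then count edges by noting each vertex has a unique partner (your relabeling $a+b=q+1$ is exactly the paper's observation that $\set{1,2^{q-\alpha}}$ connects only to $\set{1,2^{\alpha+1}}$), with the same parity case analysis handling the loop at the middle vertex when $q$ is odd.
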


\begin{proof}
		If $v\in V(G(2^q,2))$, then by Theorem $\ref{2by2thm}$, $v=\{1,2^{q-\alpha}\}$ for some $\alpha\in\mathbb{N}_0$, $0\leq\alpha\leq q-1$. Conversely, let $v=\{1,2^{q-\alpha}\}$ for some $\alpha\in\mathbb{N}_0$, $0\leq \alpha\leq q-1$. Let $\beta=q-\alpha-1$. Then $\alpha+\beta=q-1$. Let $J=\{0,2^\alpha\}$ and $K=\{0,2^{\beta}\}$. Then $\mathcal{P}_{2^q}(J)=\{1,2^{q-\alpha}\}$ and $\mathcal{P}_{2^q}(K)=\{1,2^{q-\beta}\}$. By Theorem \ref{2by2thm}, $H_{J,K}$ is a Hadamard submatrix of $\mathcal{F}_{2^q}$. Therefore, $v=\mathcal{P}_{2^q}(J)\in V(G(2^q,2))$. Thus, $V(G(2^q,2))=\{\{1,2^q-\alpha\}:\alpha\in\mathbb{N}_0,0\leq \alpha\leq q-1\}$, and so $\abs{V(G(2^q,2))}=q$.
		
		By Theorem \ref{2by2thm}, for any $0\leq\alpha\leq q-1$, $\{1,2^{q-\alpha}\}$ is connected only to $\{1,2^{\alpha+1}\}$. If $q$ is even, then $q-\alpha$ and $\alpha+1$ have opposite parity, so that $q-\alpha\neq\alpha+1$, and hence $\{1,2^{q-\alpha}\}$ and $\{1,2^{\alpha+1}\}$ are distinct vertices. Thus, $\abs{E(G(2^q,2))}=\frac{q}{2}$. If $q$ is odd, then $q-\alpha=\alpha+1$ only when $\alpha=\frac{q-1}{2}$. Hence, $\abs{E(G(2^q,2))}=\frac{q-1}{2}+1=\frac{q+1}{2}$. Therefore, $\abs{E(G(2^q,2))}=\lceil\frac{q}{2}\rceil$.
\end{proof}

Since any set of the form $\set{1,2^a}$, where $1\leq a\leq q$, can be realized as the primitive set of a 2-element set (namely, $\set{1,2^a}=\mathcal{P}_{2^q}(\set{0,2^{q-a}})$), Theorem \ref{2by2thm} implies that $G(2^q,2)$ is formed simply by connecting vertices of the form $\set{1,2^a}$, $1\leq a\leq q$, where the powers on the 2 add up to $q+1$. For example, here are the graphs of $G(16,2)$ and $G(32,2)$:
\begin{figure}[H]
\includegraphics[scale=.25]{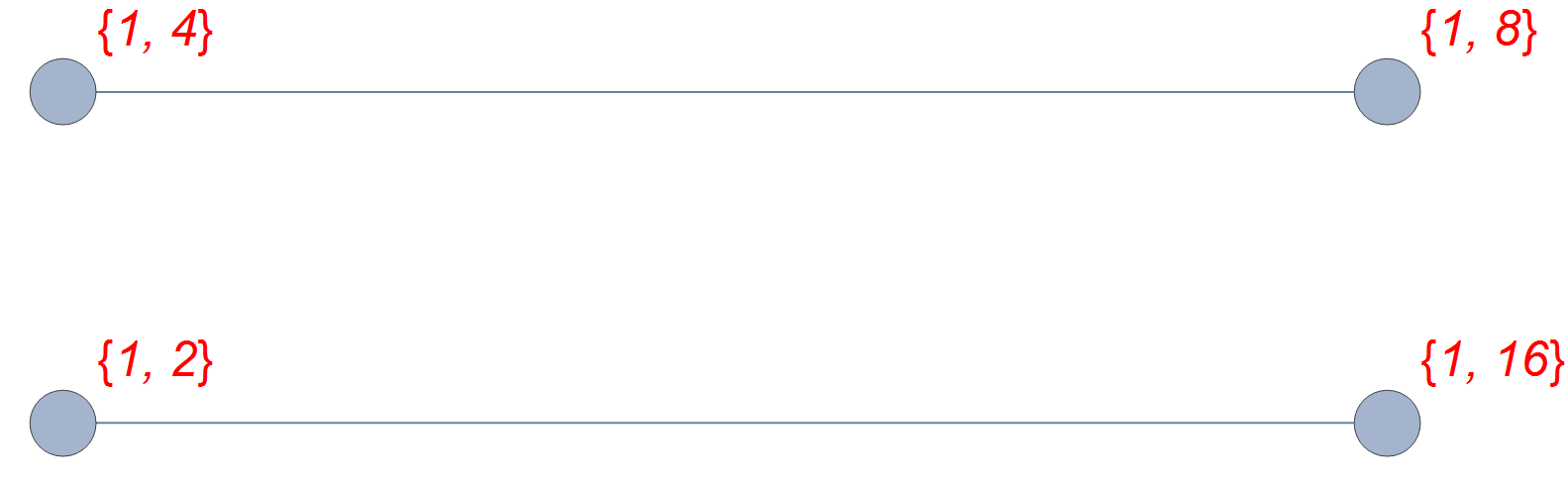}
\caption{G(16,2)}
\end{figure}
\begin{figure}[H]\includegraphics[scale=.20]{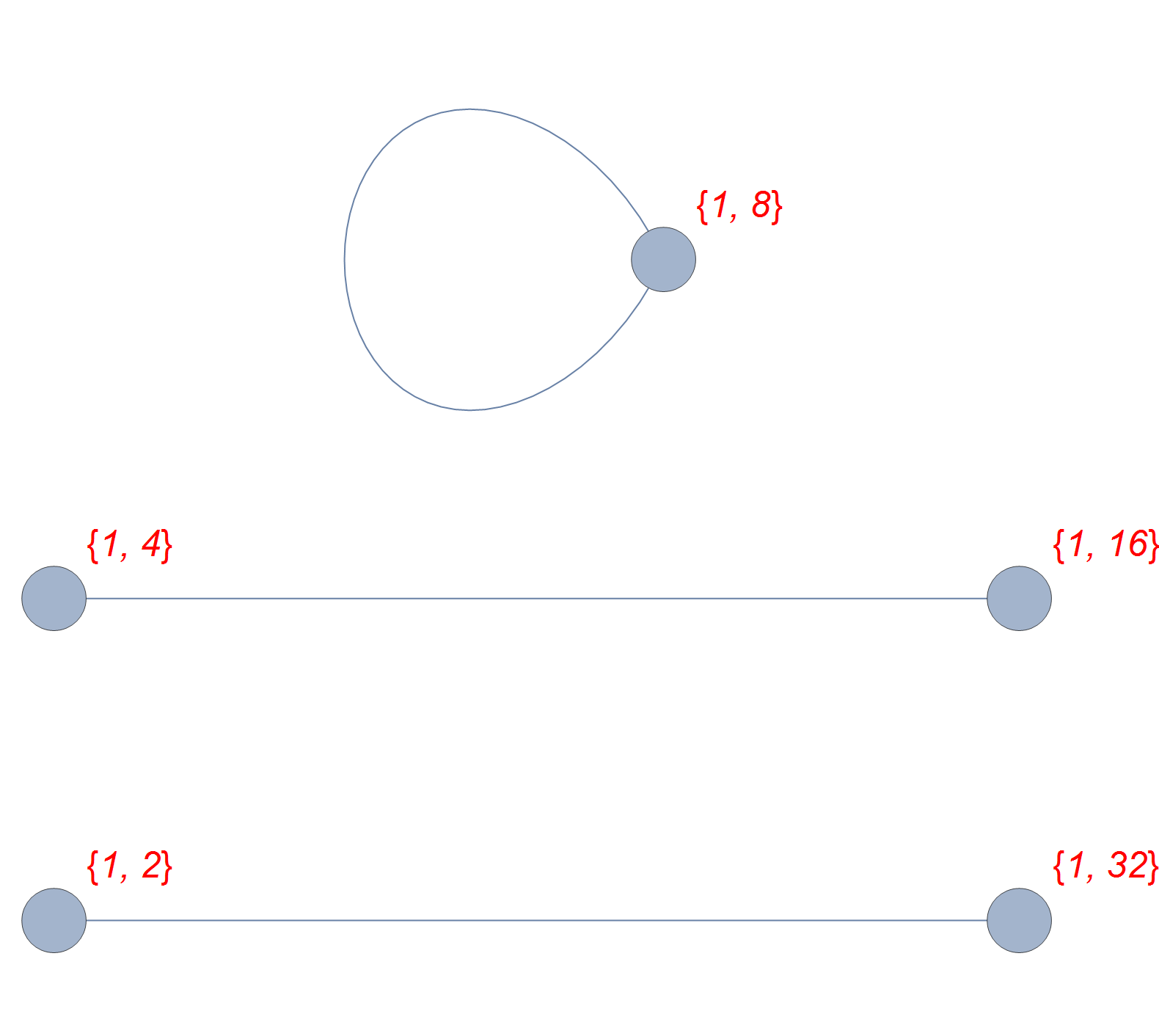}
\caption{G(32,2)}\end{figure}

In $G(16,2)$, we connect sets of the form $\set{1,2^a}$ where the powers on the 2 add up to 5. In $G(32,2)$, we connect sets of the form $\set{1,2^a}$ where the powers add up to 6.

We also have the following complete characterization of $G(2p,2)$, where $p$ is a prime other than 2:

\begin{thm}\label{pin2pthm}
    Let $p$ be a prime greater than $2$. Let $J,K\subseteq\{0,1,\ldots,2p -1\}$, $\abs{J}=\abs{K}=2$. Then $H_{J,K}$ is a Hadamard submatrix of $\mathcal{F}_{2p}$ if and only if ($\mathcal P_{2p} (J) , \mathcal P_{2p} (K)) $ is contained within $\{ (\{ 1 , 2 \} , \{ 1,2 \}) , (\{ 1 , 2 \} , \{ 1, 2p \}), (\{ 1, 2p \}, \{ 1 , 2 \} ) \} $. 
\end{thm}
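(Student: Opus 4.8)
The plan is to reduce the statement to a single two‑element orthogonality computation and then translate the resulting arithmetic condition into the language of primitive sets. First I would use the two shift‑invariance propositions (the one preserving Hadamard‑ness and the one preserving $\mathcal{P}_{2p}$) to assume without loss of generality that $j_1=k_1=0$, so that $J=\set{0,j_2}$ and $K=\set{0,k_2}$ with $0<j_2,k_2<2p$. Since the only positive divisors of $2p$ are $1,2,p,2p$, and every nonzero $d\in\mathcal{D}(J)$ satisfies $0<\abs{d}<2p$, the value $\frac{2p}{\gcd(2p,d)}$ can only be $2$, $p$, or $2p$. Hence $\mathcal{P}_{2p}(J)=\set{1,s_J}$ with $s_J\in\set{2,p,2p}$, and likewise $\mathcal{P}_{2p}(K)=\set{1,s_K}$.

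Next I would record the precise dictionary between $j_2$ and $s_J$, which is the heart of the translation. Writing $2p=2\cdot p$ and computing $\gcd(2p,j_2)$ prime by prime gives $s_J=2\iff j_2=p$ (the only multiple of $p$ in range), $s_J=p\iff j_2$ is even, and $s_J=2p\iff j_2$ is odd and $j_2\neq p$; the same statements hold for $k_2$ and $s_K$.

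Then, exactly as in the proof of Theorem \ref{2by2thm}, $H_{J,K}$ is Hadamard iff its two rows are orthogonal, i.e. iff $1+e^{2\pi i j_2 k_2/(2p)}=0$, which is equivalent to $\frac{j_2 k_2}{p}$ being an odd integer. I would unpack this single condition: it forces $j_2 k_2$ to be odd (so both $j_2$ and $k_2$ must be odd) and $p\mid j_2 k_2$. Because $p$ is prime and both factors lie in $(0,2p)$, $p\mid j_2 k_2$ holds iff $j_2=p$ or $k_2=p$. I want to flag the step most likely to trip one up: once both factors are odd, the requirement that $\frac{j_2 k_2}{p}$ be \emph{odd} (rather than merely an integer) is automatic, so the condition must not be misread as $\nu_p(j_2 k_2)=1$ — in particular $j_2=k_2=p$ is allowed, and this is exactly what keeps the pair $(\set{1,2},\set{1,2})$ in the list.

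Finally I would combine the two previous steps. The orthogonality condition ``$j_2,k_2$ both odd and ($j_2=p$ or $k_2=p$)'' translates, via the dictionary, into: either $j_2=p$ with $k_2$ odd, or $k_2=p$ with $j_2$ odd. Sorting these by whether the odd value equals $p$ yields precisely $(s_J,s_K)\in\set{(2,2),(2,2p),(2p,2)}$, i.e. $(\mathcal{P}_{2p}(J),\mathcal{P}_{2p}(K))$ lies in the stated three‑element set; since each implication above is reversible, both directions follow at once. The only genuine obstacle is the careful parity/divisibility bookkeeping in the dictionary together with the odd‑versus‑integer subtlety just noted; everything else reuses the orthogonality computation already performed for $G(2^q,2)$.
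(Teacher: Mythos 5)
Your proposal is correct and follows essentially the same route as the paper: reduce via the shift propositions to $J=\set{0,j_2}$, $K=\set{0,k_2}$, observe that Hadamardness is equivalent to $1+e^{2\pi i j_2k_2/(2p)}=0$, i.e.\ to $\frac{j_2k_2}{p}$ being an odd integer, and translate that arithmetic condition into the primitive sets via $\gcd(2p,\cdot)$, correctly retaining the case $j_2=k_2=p$ that yields $(\set{1,2},\set{1,2})$. The only organizational difference is that your explicit iff-dictionary makes the converse automatic, whereas the paper proves the converse separately, invoking Theorem \ref{equivtheorem} to reduce without loss of generality to $\mathcal{P}_{2p}(J)=\set{1,2}$ and then redoing the exponential computation.
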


\begin{proof}
    Suppose $H_{J,K}$ is Hadamard. Because $H_{J,K}$ has mutually orthogonal rows, for any $d\in\mathcal{D}(J)\setminus\set{0}$, $$ \sum_{k \in K} e^{\frac{2\pi i d k}{2 p}} = 0.$$ 
    Without loss of generality, we may assume that $j_1 = k_1 = 0$, so that $\mathcal{D}(J)=\{0,j_2,-j_2\}$ and $\mathcal{D}(K)=\{0,k_2,-k_2\}$. Therefore, taking $d=j_2$, $$0 = 1 +  e^{\frac{2\pi i j_2 k_2}{2p}}.$$ 
Therefore, $\ds e^{\frac{2\pi i j_2 k_2}{2p}} = -1$. 
This occurs when $\frac{j_2 k_2}{p} \in \mathbb Z_{\text{odd}}$.
Thus, $j_2 k_2 = y p , y \in \mathbb Z_{\text{odd}}$.
Therefore, $j_2$ and $k_2$ are odd, and at least one of them contains $p$ as a factor.

    If $p$ is a factor of $j_2$, then
		\begin{align*}\frac{2p}{\gcd(2p,d)}&=\frac{2p}{\gcd(2p,j_2)}\\
    &=\frac{2p}{p}\\
    &=2\in\mathcal{P}_{2p}(J).\end{align*}
	Similarly, if $p$ is a factor of $k_2$, then $2\in\mathcal{P}_{2p}(K)$.
    
If $p$ is not a factor of $j_2$, then 
\begin{align*}\frac{2p}{\gcd(2p,d)}&=\frac{2p}{\gcd(2p,j_2)}\\
    &=\frac{2p}{1}\\
    &=2p\in\mathcal{P}_{2p}(J).\end{align*}
Similarly, if $p$ is not a factor of $k_2$, then $2p\in\mathcal{P}_{2p}(K)$.
Taking $d=-j_2$ and $d=-k_2$ produces results that match with what was just shown. Finally, taking $d=0$ shows that $1\in\mathcal{P}(J)$ and $1\in\mathcal{P}(K)$ as usual.
Therefore,  ($\mathcal P_{2p} (J) , \mathcal P_{2p} (K)) $ is contained within $\{ (\{ 1 , 2 \} , \{ 1,2 \}) , (\{ 1 , 2p \} , \{ 1, 2 \}), (\{ 1 , 2 \} , \{ 1, 2p \}) \} $.
    
    Conversely, suppose  $(\mathcal P_{2p} (J) , \mathcal P_{2p} (K))\in\set{ (\{ 1 , 2 \} , \{ 1,2 \}) , (\{ 1 , 2p \} , \{ 1, 2 \}), (\{ 1 , 2 \} , \{ 1, 2p \})}$.
    Without loss of generality, we may assume $J=\{0,j_2\}$ and $K=\{0,k_2\}$. 
    Note that $\mathcal{P}_{2p}(J)=\set{1,2}$ or $\mathcal{P}_{2p}(K)=\set{1,2}$. By Theorem \ref{equivtheorem}, we may assume without loss of generality that $\mathcal{P}_{2p}(J)=\set{1,2}$. Thus we must have $2 =\frac{2p}{\gcd(2p,j_2)}$, implying that $\gcd(2p,j_2)=p$. Thus, $j_2=p y_j$, where $y_j$ is odd. 
		
		Since $\mathcal{P}_{2p}(K)=\set{1,2}$ or $\mathcal{P}_{2p}(K)=\set{1,2p}$, we must have $\frac{2p}{\gcd(2p,k_2)}=2$ or $\frac{2p}{\gcd(2p,k_2)}=2p$, implying that $2$ is not a factor of $\gcd(2p,k_2)$, and so $k_2$ is odd.
		
	Then
    \begin{align*}e^{\frac{2\pi i(j_2-j_1)k_1}{2p}}+e^{\frac{2\pi i(j_2-j_1)k_2}{2p}}&=1+e^{\frac{2\pi i j_2k_2}{2p}}\\
    &=1+e^{\frac{2\pi i p y_jk_2}{2p}}\\
    &=1+e^{\pi i y_jk_2}\\
    &=1-1=0.\end{align*}
    Hence, the two rows of $H_{J,K}$ are orthogonal, implying that $H_{J,K}$ is Hadamard.
\end{proof}

\begin{cor}Let $p$ be an odd prime. Then $V(G(2p,2))=\set{\set{1,2},\set{1,2p}}$ and $E(G(2p,2))=\set{(\set{1,2},\set{1,2}),(\set{1,2},\set{1,2p})}$.
\end{cor}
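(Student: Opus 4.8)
The plan is to deduce this corollary directly from Theorem \ref{pin2pthm}, which already classifies every $2\times 2$ Hadamard submatrix of $\mathcal{F}_{2p}$ according to the pair of primitive sets of its rows and columns. All that remains is to translate that classification into the vertex and edge sets of $G(2p,2)$ via the definition of the compatibility graph, while being careful to confirm that both candidate primitive sets are actually realized and to interpret the loop and the undirected edge correctly.

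First I would pin down the vertices. By definition, a set $P$ is a vertex of $G(2p,2)$ precisely when $P=\mathcal{P}_{2p}(J)$ for some $J$ admitting a $K$ with $H_{J,K}$ a $2\times 2$ Hadamard submatrix. Theorem \ref{pin2pthm} states that whenever $H_{J,K}$ is Hadamard, the pair $(\mathcal{P}_{2p}(J),\mathcal{P}_{2p}(K))$ lies among the three listed pairs, so the only primitive sets that can occur as a row or column set are $\set{1,2}$ and $\set{1,2p}$; this gives $V(G(2p,2))\subseteq\set{\set{1,2},\set{1,2p}}$. For the reverse inclusion I would exhibit concrete realizations: $J=\set{0,p}$ has $\mathcal{D}(J)=\set{0,\pm p}$ and hence $\mathcal{P}_{2p}(J)=\set{1,2}$, while $J=\set{0,1}$ gives $\mathcal{P}_{2p}(J)=\set{1,2p}$. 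Pairing $J=\set{0,p}$ with $K=\set{0,1}$ produces the admissible pair $(\set{1,2},\set{1,2p})$, so by the converse direction of Theorem \ref{pin2pthm} the corresponding submatrix is Hadamard and both sets are genuinely attained as vertices.

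Next I would read off the edges, noting that $E(G(2p,2))$ consists of the unordered pairs $\set{\mathcal{P}_{2p}(J),\mathcal{P}_{2p}(K)}$ arising from Hadamard submatrices, so each admissible ordered pair in Theorem \ref{pin2pthm} contributes an edge. The pair $(\set{1,2},\set{1,2})$ yields the loop $(\set{1,2},\set{1,2})$, the two pairs $(\set{1,2},\set{1,2p})$ and $(\set{1,2p},\set{1,2})$ collapse to the single undirected edge $(\set{1,2},\set{1,2p})$, and—crucially—the diagonal pair $(\set{1,2p},\set{1,2p})$ is absent from the theorem's list, so there is no loop at $\set{1,2p}$. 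This yields exactly $E(G(2p,2))=\set{(\set{1,2},\set{1,2}),(\set{1,2},\set{1,2p})}$. The only real subtlety, and thus the step I would watch most carefully, is this bookkeeping between the ordered pairs of Theorem \ref{pin2pthm} and the loops-and-undirected-edges structure of $G(2p,2)$; everything else is an immediate application of the preceding theorem.
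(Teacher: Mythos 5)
Your proposal is correct and follows exactly the paper's approach: the paper dispatches this corollary as ``immediate from Theorem \ref{pin2pthm},'' and your argument simply spells out the details that the paper leaves implicit, namely exhibiting $\mathcal{P}_{2p}(\set{0,p})=\set{1,2}$ and $\mathcal{P}_{2p}(\set{0,1})=\set{1,2p}$ as realizations and translating the three admissible ordered pairs into one loop plus one undirected edge. Your care about the absent diagonal pair $(\set{1,2p},\set{1,2p})$ and the collapse of the two mixed ordered pairs into a single edge is exactly the right bookkeeping.
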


\begin{proof}The proof is immediate from Theorem \ref{pin2pthm}\end{proof}

Thus, $G(2p,2)$ is always a 2-vertex graph with $\set{1,2}$ and $\set{1,2p}$ as the vertices, an edge between them, and a loop from $\set{1,2}$ to itself.

The situation becomes a bit more complicated when $m$ has more prime factors. $G(m,2)$ can be completely constructed via the following result:

\begin{thm}\label{gen2by2thm}Suppose $J,K\subseteq\set{0,1,\ldots,m-1}$ with $\abs{J}=\abs{K}=2$. Then $H_{J,K}$ is a Hadamard submatrix of $\mathcal{F}_m$ if and only if $\nu_2^{min}(\mathcal{P}_m(J)\setminus\set{1})+\nu_2^{min}(\mathcal{P}_m(K)\setminus\set{1})=\nu_2^{max}(\mathcal{P}_m(J)\setminus\set{1})+\nu_2^{max}(\mathcal{P}_m(K)\setminus\set{1})=\nu_2(m)+1$ and $\nu_p^{max}(\mathcal{P}_m(J)\setminus\set{1})+\nu_p^{max}(\mathcal{P}_m(K)\setminus\set{1})\leq\nu_p(m)$ for all odd primes $p$.
   
\end{thm}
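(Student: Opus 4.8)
The plan is to reduce to a single pair of rows and a single pair of columns, rewrite the Hadamard condition as a statement about when one root of unity equals $-1$, and then translate everything into $p$-adic valuations prime by prime.

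First, using shift-invariance of both the Hadamard property and the $m$-th primitive set (the two shift-invariance propositions stated earlier), I would assume without loss of generality that $J=\set{0,j}$ and $K=\set{0,k}$ with $1\le j,k\le m-1$. Then $\mathcal{D}(J)\setminus\set{0}=\set{j,-j}$, so $\mathcal{P}_m(J)\setminus\set{1}$ is the single element $s_J:=\frac{m}{\gcd(m,j)}$, and likewise $\mathcal{P}_m(K)\setminus\set{1}=\set{s_K}$ with $s_K=\frac{m}{\gcd(m,k)}$. Because these sets are singletons, $\nu_p^{\min}$ and $\nu_p^{\max}$ coincide and equal $\nu_p(s_J)$, $\nu_p(s_K)$ for every prime $p$; in particular the first equality in the theorem's $2$-adic condition is automatic, and the hypotheses reduce to $\nu_2(s_J)+\nu_2(s_K)=\nu_2(m)+1$ together with $\nu_p(s_J)+\nu_p(s_K)\le\nu_p(m)$ for every odd prime $p$.

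Next I would compute the Hadamard condition directly: the only nontrivial inner product is between the two rows, which equals $1+e^{2\pi ijk/m}$, so $H_{J,K}$ is Hadamard if and only if $e^{2\pi ijk/m}=-1$, i.e. if and only if $\frac{2jk}{m}$ is an odd integer. I would then unpack this single scalar condition via $p$-adic orders: $\frac{2jk}{m}$ is an odd integer exactly when $\nu_2(2jk)=\nu_2(m)$ and $\nu_p(2jk)\ge\nu_p(m)$ for every odd prime $p$, that is, when $\nu_2(j)+\nu_2(k)=\nu_2(m)-1$ and $\nu_p(j)+\nu_p(k)\ge\nu_p(m)$ for all odd $p$.

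Finally I would match these against the primitive-set conditions using $\nu_p(s_J)=\nu_p(m)-\min\set{\nu_p(m),\nu_p(j)}$ (and similarly for $s_K$). For $p=2$: the identity $\nu_2(s_J)+\nu_2(s_K)=\nu_2(m)+1$ together with the bound $\nu_2(s_J),\nu_2(s_K)\le\nu_2(m)$ forces each to be at least $1$, hence $\nu_2(j),\nu_2(k)<\nu_2(m)$, so the $\min$ unclamps and the identity becomes $\nu_2(j)+\nu_2(k)=\nu_2(m)-1$, and this runs both ways. The step I expect to require the most care is the odd-prime case, where the $\min$ clamping must be treated by cases: when $\nu_p(j),\nu_p(k)\le\nu_p(m)$ the inequality $\nu_p(s_J)+\nu_p(s_K)\le\nu_p(m)$ reads exactly as $\nu_p(j)+\nu_p(k)\ge\nu_p(m)$, whereas if either $\nu_p(j)$ or $\nu_p(k)$ exceeds $\nu_p(m)$ then both inequalities hold automatically. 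Checking that this casework produces a genuine two-way equivalence, rather than a single implication, is the main obstacle; Proposition \ref{compprop} records precisely the clamped valuation inequalities needed and can be invoked to streamline this matching.
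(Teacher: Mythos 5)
Your proposal is correct and follows essentially the same route as the paper's proof: reduce by shift-invariance to $J=\set{0,j}$, $K=\set{0,k}$, characterize Hadamardness as $\frac{2jk}{m}$ being an odd integer, and then match the resulting valuation conditions $\nu_2(j)+\nu_2(k)=\nu_2(m)-1$, $\nu_p(j)+\nu_p(k)\geq\nu_p(m)$ against the primitive-set conditions via the clamped formula $\nu_p(s_J)=\nu_p(m)-\min\set{\nu_p(m),\nu_p(j)}$, which is exactly the content of Proposition \ref{compprop}. Your observation that $\mathcal{P}_m(J)\setminus\set{1}$ is a singleton (so min and max coincide and the first equality is automatic) is one the paper itself makes in the remark following the theorem; the paper merely retains the max/min phrasing, and the heavier four-case use of Proposition \ref{compprop}, in order to parallel the proof of Theorem \ref{3by3thm}.
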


\begin{proof}
    First suppose $H_{J,K} $ is a Hadamard submatrix of $\mathcal F_m$. Without loss of generality, assume $J = \{ 0 , j_2 \}$ and $K = \{ 0 , k_2 \}$. 
   
    $H_{J,K}$ is Hadamard if and only if for all $d \in \mathcal D(J) \setminus \{0\}$, $\sum_{k\in K}{e^{\frac{2\pi i dk}{m}}} = 0.$ Since $\mathcal D(J) = \{ 0, \pm j_2 \}$, $H_{J,K}$ is Hadamard if and only if $0 = 1 + e^{\frac{2\pi i j_2k_2}{m}}$ and $0 = 1 + e^{-\frac{2\pi i j_2k_2}{m}}$. Hence, $H_{J,K}$ is Hadamard if and only if $\frac{2j_2k_2}{m} $ is an odd integer. 
   
    It follows that $H_{J,K}$ is Hadamard if and only if the following conditions hold:
    \begin{equation}\label{cond4}
        \begin{aligned}
            \nu_2(j_2)+\nu_2(k_2)+1       & =\nu_2(m),                                    \\
            \nu_{p}(j_2)+\nu_{p}(k_2) & \geq \nu_{p}(m) \text{ for all primes } p\neq2.
        \end{aligned}
    \end{equation}
   
    We will show these conditions are equivalent to
    \begin{equation}\label{cond5}\begin{aligned}
		&\nu_2^{min}(\mathcal{P}_m(J)\setminus\set{1})+\nu_2^{min}(\mathcal{P}_m(K)\setminus\set{1})\\
		&=\nu_2^{max}(\mathcal{P}_m(J)\setminus\set{1})+\nu_2^{max}(\mathcal{P}_m(K)\setminus\set{1})\\
		&=\nu_2(m)+1,\\
		&\text{and for all primes }p\neq2,\\
    &\nu_p^{max}(\mathcal{P}_m(J)\setminus\set{1})+\nu_p^{max}(\mathcal{P}_m(K)\setminus\set{1})\leq\nu_p(m),
		\end{aligned}\end{equation}
    which will complete the proof.
		
		By Proposition \ref{compprop}, for any prime $p$ we have
\begin{align*}&\nu_p^\text{max}(\mathcal{P}_m(J)\setminus\set{1})+\nu_p^\text{max}(\mathcal{P}_m(K)\setminus\set{1})\\
&\leq\text{max}\set{0,\nu_p(m)-\nu_p^\text{min}(\mathcal{D}(J)\setminus\set{0})}+\text{max}\set{0,\nu_p(m)-\nu_p^\text{min}(\mathcal{D}(K)\setminus\set{0})}.
\end{align*}
There are two possibilities for each maxima, and hence four possibilities total. Observe that:
$$0+0\leq\nu_p(m),$$
$$\nu_p(m)-\nu_p^\text{min}(\mathcal{D}(J)\setminus\set{0})+0\leq\nu_p(m),$$ and
$$0+\nu_p(m)-\nu_p^\text{min}(\mathcal{D}(K)\setminus\set{0})\leq\nu_p(m).$$
As for the fourth possibility, if $p=2$ we have
\begin{align*}&\nu_2(m)-\nu_2^\text{min}(\mathcal{D}(J)\setminus\set{0})+\nu_2(m)-\nu_2^\text{min}(\mathcal{D}(K)\setminus\set{0})\\
&=2\nu_2(m)-(\nu_2(m)-1)\\
&=\nu_2(m)+1,
\end{align*}
and if $p\neq 2$ we have
\begin{align*}&\nu_p(m)-\nu_p^\text{min}(\mathcal{D}(J)\setminus\set{0})+\nu_p(m)-\nu_p^\text{min}(\mathcal{D}(K)\setminus\set{0})\\
&=2\nu_p(m)-\nu_p^\text{min}(\mathcal{D}(J)\setminus\set{0})-\nu_p^\text{min}(\mathcal{D}(K)\setminus\set{0})\\
&\leq2\nu_p(m)-\nu_p(m)\\
&=\nu_p(m).
\end{align*}
Therefore,
$$\nu_2^\text{max}(\mathcal{P}_m(J)\setminus\set{1})+\nu_2^\text{max}(\mathcal{P}_m(K)\setminus\set{1})\leq\nu_2(m)+1$$ and for $p\neq 2$,
$$\nu_p^\text{max}(\mathcal{P}_m(J)\setminus\set{1})+\nu_p^\text{max}(\mathcal{P}_m(K)\setminus\set{1})\leq\nu_p(m).$$
By Proposition \ref{compprop}, we also have
\begin{align*}&\nu_2^\text{min}(\mathcal{P}_m(J)\setminus\set{1})+\nu_2^\text{min}(\mathcal{P}_m(K)\setminus\set{1})\\
&\geq2\nu_2(m)-\nu_2^\text{max}(\mathcal{D}(J)\setminus\set{0})-\nu_2^\text{max}(\mathcal{D}(K)\setminus\set{0})\\
&=2\nu_2(m)-(\nu_2(m)-1)\\
&=\nu_2(m)+1.
\end{align*}
Thus, $$\nu_2^\text{min}(\mathcal{P}_m(J)\setminus\set{1})+\nu_2^\text{min}(\mathcal{P}_m(K)\setminus\set{1})=\nu_2^\text{max}(\mathcal{P}_m(J)\setminus\set{1})+\nu_2^\text{max}(\mathcal{P}_m(K)\setminus\set{1})=\nu_2(m)+1.$$
This proves the forward implication.

Now suppose that $(\ref{cond5})$ holds. Since $\nu_2^\text{min}(\mathcal{P}_m(J)\setminus\set{1})+\nu_2^\text{min}(\mathcal{P}_m(K)\setminus\set{1})=\nu_2(m)+1$, $\nu_2^\text{min}(\mathcal{P}_m(J)\setminus\set{1})\leq\nu_2(m)$, and $\nu_2^\text{min}(\mathcal{P}_m(K)\setminus\set{1})\leq\nu_2(m)$, it follows that $\nu_2^\text{min}(\mathcal{P}_m(J)\setminus\set{1})\geq1$ and $\nu_2^\text{min}(\mathcal{P}_m(J)\setminus\set{1})\geq1$. So by Proposition \ref{compprop}, we have

\begin{align*}&\nu_2^\text{max}(\mathcal{D}(J)\setminus\set{0})+\nu_2^\text{max}(\mathcal{D}(K)\setminus\set{0})\\
&\leq2\nu_2(m)-\nu_2^\text{min}(\mathcal{P}_m(J)\setminus\set{1})-\nu_2^\text{min}(\mathcal{P}_m(K)\setminus\set{1})\\
&=2\nu_2(m)-(\nu_2(m)+1)\\
&=\nu_2(m)-1.
\end{align*}
Also by Proposition \ref{compprop},

\begin{align*}&\nu_2^\text{min}(\mathcal{D}(J)\setminus\set{0})+\nu_2^\text{min}(\mathcal{D}(K)\setminus\set{0})\\
&\geq2\nu_2(m)-\nu_2^\text{max}(\mathcal{P}_m(J)\setminus\set{1})-\nu_2^\text{max}(\mathcal{P}_m(J)\setminus\set{1})\\
&=\nu_2(m)-(\nu_2(m)+1)\\
&=\nu_2(m)-1.\end{align*}

Hence, $$\nu_2^\text{max}(\mathcal{D}(J)\setminus\set{0})+\nu_2^\text{max}(\mathcal{D}(K)\setminus\set{0})=\nu_2^\text{min}(\mathcal{D}(J)\setminus\set{0})+\nu_2^\text{min}(\mathcal{D}(K)\setminus\set{0})=\nu_2(m)-1.$$

Finally, for a prime $p\neq2$, by Proposition \ref{compprop} we have

\begin{align*}&\nu_p^\text{min}(\mathcal{D}(J)\setminus\set{0})+\nu_p^\text{min}(\mathcal{D}(K)\setminus\set{0})\\
&\geq2\nu_p(m)-\nu_p^\text{max}(\mathcal{P}_m(J)\setminus\set{1})-\nu_p^\text{max}(\mathcal{P}_m(J)\setminus\set{1})\\
&\geq\nu_p(m).\end{align*}

Since, $j_2\in\mathcal{D}(J)\setminus\set{0}$ and $k_2\in\mathcal{D}(K)\setminus\set{0}$, the conditions in $(\ref{cond4})$ follow immediately.
\end{proof}

Again, in the $n=2$ case, it is clear that any primitive set consists of two elements: 1 and another divisor of $m$. (Since there is only one element other than 1, of course $\nu_p^{\text{max}}(\mathcal{P}_m(J)\setminus\set{1})=\nu_p^{\text{min}}(\mathcal{P}_m(J)\setminus\set{1})$ in the $n=2$ case, but we retain the distinction in the statement of Theorem \ref{gen2by2thm} in order to parallel Theorem \ref{3by3thm}, where there is a difference.) Conversely, any set of the form $\set{1,d}$, where $d\neq1$ is a divisor of $m$, is realizable as the primitive set of a 2-element set, namely $\set{1,d}=\mathcal{P}_m(\set{0,m/d})$. Therefore, Theorem \ref{gen2by2thm} allows the complete construction of every graph $G(m,2)$: We simply take as vertices sets of the form $\set{1,d}$, where $d\neq1$ is a divisor of $m$, and connect them if the 2-adic orders of the $d$'s add up to one more than the 2-adic order of $m$ and the $p$-adic orders of the $d$'s add up to no more than the $p$-adic order of $m$ for any odd prime $p$.

For example, we can see this in the graph $G(180,2)$;

\begin{figure}[H]
\includegraphics[scale=.25]{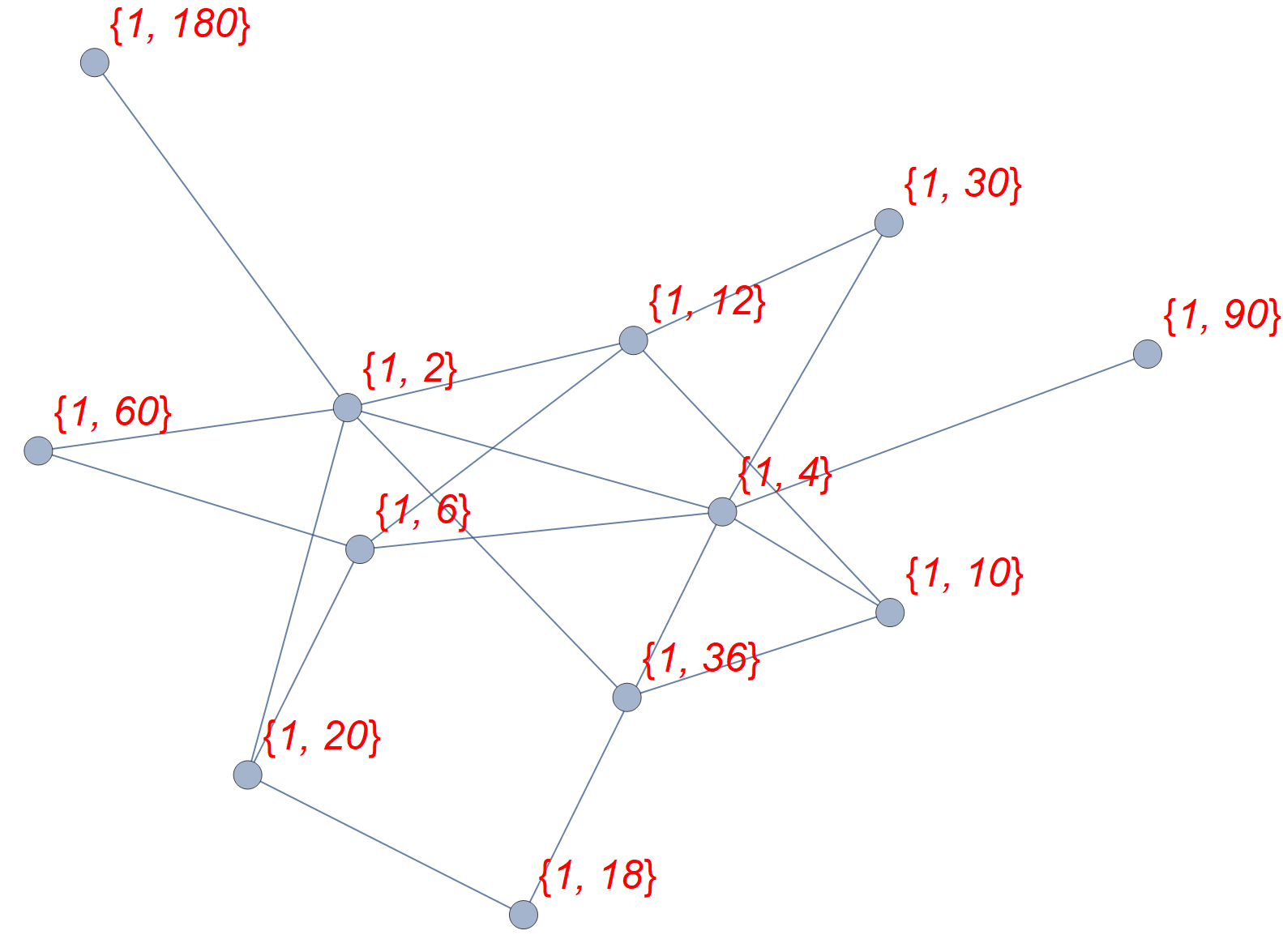}
\caption{G(180,2)}
\end{figure}

Since $180=2^2\cdot3^2\cdot5$, we connect sets of the form $\set{1,d}$, where $d\neq1$ is a divisor of 180, and such that the 2-adic orders of the $d$'s add up to exactly $\nu_2(180)+1=3$, the $3$-adic orders add up to no more than $\nu_3(180)=2$, and the 5-adic orders add up to no more than $\nu_5(180)=1$. For instance, $\set{1,20}$ and $\set{1,18}$ are connected because $\nu_2(20)+\nu_2(18)=2+1=3$, $\nu_3(20)+\nu_3(18)=0+2\leq2$, and $\nu_5(20)+\nu_5(18)=1+0\leq1$. On the other hand, $\set{1,18}$ and $\set{1,6}$ are not connected, because $\nu_3(18)+\nu_3(6)=2+1=3>2$.

It ought to be pointed out that the search for $2\times 2$ Hadamard submatrices boils down to an identification of the $-1$ entries in $\mathcal{F}_m$. Theorem $\ref{gen2by2thm}$ simply reframes the computation of these entries in terms of the primitive sets.

Via a similar but slightly different proof, we have the following result that eases construction of $G(m,3)$:

\begin{thm}\label{3by3thm}Suppose $J,K\subseteq\set{0,1,\ldots,m-1}$ with $\abs{J}=\abs{K}=3$. Then $H_{J,K}$ is a Hadamard submatrix of $\mathcal{F}_m$ if and only if \begin{align*}&\nu_3^\text{min}(\mathcal{P}_m(J)\setminus\set{1})+\nu_3^\text{min}(\mathcal{P}_m(K)\setminus\set{1})\\
&=\nu_3^\text{max}(\mathcal{P}_m(J)\setminus\set{1})+\nu_3^\text{max}(\mathcal{P}_m(K)\setminus\set{1})\\
&=\nu_3(m)+1\end{align*} and $$\nu_p^\text{max}(\mathcal{P}_m(J)\setminus\set{1})+\nu_p^\text{max}(\mathcal{P}_m(K)\setminus\set{1})\leq\nu_p(m)$$
 for all primes $p$ other than 3.
\end{thm}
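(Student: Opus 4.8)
The statement for $G(m,3)$ is the exact analogue of Theorem \ref{gen2by2thm}, with the role of the prime $2$ now played by $3$. The reason $2$ was special in the $n=2$ case is that a $2\times 2$ Hadamard condition reduces to $e^{2\pi i j_2 k_2/m}=-1$, i.e.\ a primitive \emph{second} root of unity. Analogously, the natural obstruction for $n=3$ is that three equally-weighted unit-modulus terms sum to zero exactly when they are the three cube roots of unity. So the plan is to mirror the proof of Theorem \ref{gen2by2thm}: reduce the Hadamard condition on $H_{J,K}$ to a divisibility statement on the $3$-adic and $p$-adic orders of the relevant differences, then translate that statement into the language of primitive sets via Proposition \ref{compprop}.

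**Key steps in order.** First I would normalize by assuming $J=\set{0,j_2,j_3}$ and $K=\set{0,k_2,k_3}$ (legitimate by Proposition shifting row/column sets). The orthogonality of distinct rows of $H_{J,K}$ says that for each nonzero $d\in\mathcal{D}(J)$, $\sum_{k\in K}e^{2\pi i dk/m}=0$, i.e.\ $1+e^{2\pi i dk_2/m}+e^{2\pi i dk_3/m}=0$. The crucial elementary lemma is that three unit-modulus complex numbers $1,\zeta,\xi$ sum to zero if and only if $\set{1,\zeta,\xi}=\set{1,\omega,\omega^2}$ where $\omega=e^{2\pi i/3}$; this is the point where $3$ enters. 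From this I extract, for each difference $d$, that $dk_2/m$ and $dk_3/m$ are congruent to $\pm1/3$ mod $1$ in the appropriate sense, which should yield $3$-adic conditions of the form $\nu_3(dk_\ell)+1=\nu_3(m)$ together with $p$-adic divisibility $\nu_p(dk_\ell)\geq\nu_p(m)$ for $p\neq3$. The analogue of the intermediate system $(\ref{cond4})$ would thus read: $\nu_3(d)+\nu_3(d')+1=\nu_3(m)$ for the generating differences, and $\nu_p(d)+\nu_p(d')\geq\nu_p(m)$ for $p\neq3$. Having established this characterization in terms of differences, the final step is identical in spirit to Theorem \ref{gen2by2thm}: apply the inequalities of Proposition \ref{compprop} in both directions to convert the $\nu_p$-conditions on $\mathcal{D}(J)\setminus\set{0}$ and $\mathcal{D}(K)\setminus\set{0}$ into the stated $\nu_p^{\text{min}}/\nu_p^{\text{max}}$ conditions on $\mathcal{P}_m(J)\setminus\set{1}$ and $\mathcal{P}_m(K)\setminus\set{1}$, and conversely.

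**Anticipated main obstacle.** The genuinely new difficulty relative to the $n=2$ case is bookkeeping over \emph{multiple} differences. For $n=3$, $\mathcal{D}(J)\setminus\set{0}$ contains three (up to sign) nonzero differences $j_2, j_3, j_3-j_2$, not just one, and the orthogonality condition must hold for \emph{every} such $d$ simultaneously. So $\nu_3^{\text{min}}$ and $\nu_3^{\text{max}}$ over $\mathcal{P}_m(J)\setminus\set{1}$ can genuinely differ (unlike $n=2$), and I must verify that the sum-to-zero condition forces \emph{all} these differences to have the same $3$-adic order $\nu_3(m)-1-\nu_3(k_\ell)$ profile, which is what collapses the $\min$ and $\max$ into the single equation $\nu_3(m)+1$. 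I expect the forward direction to require checking that no difference can violate the required order — in particular that $\nu_3(j_3-j_2)$ is controlled by $\nu_3(j_2)$ and $\nu_3(j_3)$ — and the reverse direction to require reconstructing the $\pm 1/3$ residues from the primitive-set data. The cleanest route is to lean on Proposition \ref{compprop} exactly as the $n=2$ proof does, treating the four max/min cases, and to isolate the ``three roots of unity'' lemma as the only place where $3$ (as opposed to $2$) is used; once that lemma is in hand, the $p$-adic manipulation for $p\neq 3$ is verbatim the same as before and presents no new obstacle.
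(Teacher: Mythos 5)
Your proposal is correct, and its skeleton is exactly the paper's: normalize $0\in J\cap K$, reduce the Hadamard property to the assertion that $e^{2\pi i d_jd_k/m}$ is a primitive cube root of unity for every $d_j\in\mathcal{D}(J)\setminus\set{0}$ and $d_k\in\mathcal{D}(K)\setminus\set{0}$, restate this as $\nu_3(d_j)+\nu_3(d_k)=\nu_3(m)-1$ together with $\nu_p(d_j)+\nu_p(d_k)\geq\nu_p(m)$ for $p\neq3$, and then convert to the stated primitive-set conditions by running Proposition \ref{compprop} in both directions, essentially word for word as in Theorem \ref{gen2by2thm}. The one genuine point of departure is the source of the cube-root fact in the forward direction. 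The paper does not use your ``three unit numbers sum to zero'' lemma: it cites the classification result (via \cite{TZ06}, \cite{Ha97}) that every $3\times3$ complex Hadamard matrix is equivalent to $\mathcal{F}_3$, concludes that the dephased $H_{J,K}$ must be one of two explicit matrices with entries in $\set{\zeta,\zeta^2}$, and then checks four cases of products $d_jd_k$. You instead apply the elementary lemma directly to each row-orthogonality equation and use the ratio trick (the quotient of the two distinct primitive cube roots $e^{2\pi i d_jk_2/m}$, $e^{2\pi i d_jk_3/m}$ is again a primitive cube root) to reach the differences involving $k_3-k_2$ and $(j_3-j_2)(k_3-k_2)$ that no single orthogonality equation sees. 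Your route buys self-containedness — the lemma is two lines of trigonometry, and the cited classification is in fact a corollary of it — at the cost of slightly more bookkeeping; the paper's citation makes the two candidate matrices explicit, which it then reuses in the converse, proving $a\neq b$, $a\neq c$, $b\neq d$ by exactly the ratio argument you describe. The obstacle you flag, controlling all of $\pm j_2,\pm j_3,\pm(j_3-j_2)$ and their column analogues simultaneously, is the right one, and ratios of entries are how the paper dispatches it as well.
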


\begin{proof}Without loss of generality, let $J=\set{0,j_2,j_3}$ and $K=\set{0,k_2,k_3}$. Observe that 
\begin{align*}\mathcal{D}(J)&=\set{0,\pm j_2,\pm j_3, \pm(j_3-j_2)}\\
\mathcal{D}(K)&=\set{0,\pm k_2,\pm k_3,\pm (k_3-k_2)}.\end{align*}

Suppose $H_{J,K}$ is Hadamard. As seen in \cite{TZ06}, which references \cite{Ha97}, all $3\times3$ Hadamard matrices are equivalent to $\mathcal{F}_3$. Since $H_{J,K}$ is dephased, there are only two possibilities for $H_{J,K}$:
$$H_{J,K}=\begin{bmatrix}1 & 1 & 1 \\ 1 & \zeta & \zeta^2 \\ 1 & \zeta^2 & \zeta\end{bmatrix}\text{ or }H_{J,K}=\begin{bmatrix}1 & 1 & 1 \\ 1 & \zeta^2 & \zeta \\ 1 & \zeta & \zeta^2\end{bmatrix},$$
where $\zeta=e^{2\pi i/3}$. 
Let $d_j\in\mathcal{D}(J)\setminus\set{0}$ and $d_k\in\mathcal{D}(K)\setminus\set{0}$. Then by the two possibilities for $H_{J,K}$, one of the following must be true:
\begin{itemize}\item For $v,w\in\set{2,3}$, $e^{ 2\pi\frac{d_jd_k}{m}}=e^{\pm2\pi i\frac{j_vk_w}{m}}\in\set{\zeta,\zeta^2}$
\item For $v\in\set{2,3}$, $e^{2\pi i\frac{d_jd_k}{m}}=e^{\pm 2\pi i\frac{j_v(k_3-k_2)}{m}}=e^{\pm 2\pi i\frac{j_vk_3}{m}}e^{\mp 2\pi i\frac{j_vk_2}{m}}\in\set{\zeta,\zeta^2}$
\item For $v\in\set{2,3}$, $e^{2\pi i\frac{d_jd_k}{m}}=e^{\pm 2\pi i\frac{k_v(j_3-j_2)}{m}}=e^{\pm 2\pi i\frac{j_3k_v}{m}}e^{\mp 2\pi i\frac{j_2k_v}{m}}\in\set{\zeta,\zeta^2}$
\item $e^{2\pi i \frac{d_jd_k}{m}}=e^{\pm 2\pi i\frac{(j_3-j_2)(k_3-k_2)}{m}}=e^{\pm 2\pi i\frac{(j_3k_3-j_3k_2-j_2k_3+j_2k_2)}{m}}=e^{\pm 2\pi i \frac{j_3(k_3-k_2)}{m}}e^{\pm 2\pi i \frac{j_2(k_2-k_3)}{m}}.$ Note that if $e^{\pm 2\pi i \frac{j_3(k_3-k_2)}{m}}=\zeta$, then $e^{\pm 2\pi i \frac{j_2(k_2-k_3)}{m}}=\zeta$. Also, if $e^{\pm 2\pi i \frac{j_3(k_3-k_2)}{m}}=\zeta^2$, then $e^{\pm 2\pi i \frac{j_2(k_2-k_3)}{m}}=\zeta^2$. Therefore, $e^{2\pi i\frac{d_jd_k}{m}}\in\set{\zeta,\zeta^2}$. 
\end{itemize}
Thus in all cases, $e^{2\pi i\frac{d_jd_k}{m}}$ is a primitive 3rd root of unity. It follows that $$\nu_3(d_j)+\nu_3(d_k)=\nu_3(m)-1$$ and for any prime $p$ other than $3$, $$\nu_p(d_j)+\nu_p(d_k)\geq\nu_p(m).$$
Since $d_j$ and $d_k$ were arbitrary, this shows that
\begin{equation}\label{3by3forward}
\begin{aligned}
\nu_3^\text{min}(\mathcal{D}(J)\setminus\set{0})+\nu_3^\text{min}(\mathcal{D}(K)\setminus\set{0})&=\nu_3(m)-1,\\
\nu_3^\text{max}(\mathcal{D}(J)\setminus\set{0})+\nu_3^\text{max}(\mathcal{D}(K)\setminus\set{0})&=\nu_3(m)-1\text{, and}\\
\nu_p^\text{min}(\mathcal{D}(J)\setminus\set{0})+\nu_p^\text{min}(\mathcal{D}(K)\setminus\set{0})&\geq\nu_p(m).\end{aligned}
\end{equation}
By Proposition \ref{compprop}, for any prime $p$ we have
\begin{align*}&\nu_p^\text{max}(\mathcal{P}_m(J)\setminus\set{1})+\nu_p^\text{max}(\mathcal{P}_m(K)\setminus\set{1})\\
&\leq\text{max}\set{0,\nu_p(m)-\nu_p^\text{min}(\mathcal{D}(J)\setminus\set{0})}+\text{max}\set{0,\nu_p(m)-\nu_p^\text{min}(\mathcal{D}(K)\setminus\set{0})}.
\end{align*}
There are two possibilities for each maxima, and hence four possibilities total. Observe that:
$$0+0\leq\nu_p(m),$$
$$\nu_p(m)-\nu_p^\text{min}(\mathcal{D}(J)\setminus\set{0})+0\leq\nu_p(m),$$ and
$$0+\nu_p(m)-\nu_p^\text{min}(\mathcal{D}(K)\setminus\set{0})\leq\nu_p(m).$$
As for the fourth possibility, by (\ref{3by3forward}), if $p=3$ we have
\begin{align*}&\nu_3(m)-\nu_3^\text{min}(\mathcal{D}(J)\setminus\set{0})+\nu_3(m)-\nu_3^\text{min}(\mathcal{D}(K)\setminus\set{0})\\
&=2\nu_3(m)-(\nu_3(m)-1)\\
&=\nu_3(m)+1,
\end{align*}
and if $p\neq 3$ we have
\begin{align*}&\nu_p(m)-\nu_p^\text{min}(\mathcal{D}(J)\setminus\set{0})+\nu_p(m)-\nu_p^\text{min}(\mathcal{D}(K)\setminus\set{0})\\
&=2\nu_p(m)-\nu_p^\text{min}(\mathcal{D}(J)\setminus\set{0})-\nu_p^\text{min}(\mathcal{D}(K)\setminus\set{0})\\
&\leq2\nu_p(m)-\nu_p(m)\\
&=\nu_p(m).
\end{align*}
Therefore,
$$\nu_3^\text{max}(\mathcal{P}_m(J)\setminus\set{1})+\nu_3^\text{max}(\mathcal{P}_m(K)\setminus\set{1})\leq\nu_3(m)+1$$ and for $p\neq 3$,
$$\nu_p^\text{max}(\mathcal{P}_m(J)\setminus\set{1})+\nu_p^\text{max}(\mathcal{P}_m(K)\setminus\set{1})\leq\nu_p(m).$$
By Proposition \ref{compprop} and (\ref{3by3forward}), we also have
\begin{align*}&\nu_3^\text{min}(\mathcal{P}_m(J)\setminus\set{1})+\nu_3^\text{min}(\mathcal{P}_m(K)\setminus\set{1})\\
&\geq2\nu_3(m)-\nu_3^\text{max}(\mathcal{D}(J)\setminus\set{0})-\nu_3^\text{max}(\mathcal{D}(K)\setminus\set{0})\\
&=2\nu_3(m)-(\nu_3(m)-1)\\
&=\nu_3(m)+1.
\end{align*}
Thus, $$\nu_3^\text{min}(\mathcal{P}_m(J)\setminus\set{1})+\nu_3^\text{min}(\mathcal{P}_m(K)\setminus\set{1})=\nu_3^\text{max}(\mathcal{P}_m(J)\setminus\set{1})+\nu_3^\text{max}(\mathcal{P}_m(K)\setminus\set{1})=\nu_3(m)+1.$$
This proves the forward implication. 

Suppose conversely that
\begin{equation}\label{3x3backward1}
\begin{aligned}&\nu_3^\text{min}(\mathcal{P}_m(J)\setminus\set{1})+\nu_3^\text{min}(\mathcal{P}_m(K)\setminus\set{1})\\
&=\nu_3^\text{max}(\mathcal{P}_m(J)\setminus\set{1})+\nu_3^\text{max}(\mathcal{P}_m(K)\setminus\set{1})\\
&=\nu_3(m)+1
\end{aligned}\end{equation}
 and 
\begin{align}\label{3x3backward2}\nu_p^\text{max}(\mathcal{P}_m(J)\setminus\set{1})+\nu_p^\text{max}(\mathcal{P}_m(K)\setminus\set{1})\leq\nu_p(m)
\end{align}
for all primes $p$ other than 3.

For a prime $p$ other than $3$, by $(\ref{3x3backward2})$ and Proposition \ref{compprop}, we have
\begin{align*}&\nu_p^\text{min}(\mathcal{D}(J)\setminus\set{0})+\nu_p^\text{min}(\mathcal{D}(K)\setminus\set{0})\\
&\geq2\nu_p(m)-\nu_p^\text{max}(\mathcal{P}_m(J)\setminus\set{1})-\nu_p^\text{max}(\mathcal{P}_m(K)\setminus\set{1})\\
&\geq2\nu_p(m)-\nu_p(m)\\
&=\nu_p(m).
\end{align*}
For $p=3$, (\ref{3x3backward1}) and Proposition \ref{compprop} imply
\begin{align*}&\nu_3^\text{min}(\mathcal{D}(J)\setminus\set{0})+\nu_3^\text{min}(\mathcal{D}(K)\setminus\set{0})\\
&\geq2\nu_3(m)-\nu_3^\text{max}(\mathcal{P}_m(J)\setminus\set{1})-\nu_3^\text{max}(\mathcal{P}_m(K)\setminus\set{1})\\
&=\nu_3(m)-1.
\end{align*}

Since $\mathcal{P}_m(J)$ and $\mathcal{P}_m(K)$ contain only factors of $m$, it follows that $\nu_3^\text{min}(\mathcal{P}_m(J)\setminus\set{1})\leq\nu_3(m)$ and $\nu_3^\text{min}(\mathcal{P}_m(K)\setminus\set{1})\leq\nu_3(m)$. Since by (\ref{3x3backward1}) $$\nu_3^\text{min}(\mathcal{P}_m(J)\setminus\set{1})+\nu_3^\text{min}(\mathcal{P}_m(K)\setminus\set{1})=\nu_3(m)+1,$$ it follows that $\nu_3^\text{min}(\mathcal{P}_m(J)\setminus\set{1})\geq1$ and $\nu_3^\text{min}(\mathcal{P}_m(K)\setminus\set{1})\geq1$. Therefore, we may apply Proposition \ref{compprop} and obtain
\begin{align*}&\nu_3^\text{max}(\mathcal{D}(J)\setminus\set{0})+\nu_3^\text{max}(\mathcal{D}(K)\setminus\set{0})\\
&\leq2\nu_3(m)-\nu_3^\text{min}(\mathcal{P}_m(J)\setminus\set{1})-\nu_3^\text{min}(\mathcal{P}_m(K)\setminus\set{1})\\
&=2\nu_3(m)-(\nu_3(m)+1)\\
&=\nu_3(m)-1.\end{align*}
Thus,
\begin{align*}&\nu_3^\text{max}(\mathcal{D}(J)\setminus\set{0})+\nu_3^\text{max}(\mathcal{D}(K)\setminus\set{0})\\
&\nu_3^\text{min}(\mathcal{D}(J)\setminus\set{0})+\nu_3^\text{min}(\mathcal{D}(K)\setminus\set{0})\\
&=\nu_3(m)-1.\end{align*}
It follows that for any $d_j\in\mathcal{D}(J)\setminus\set{0}$ and $d_k\in\mathcal{D}(K)\setminus\set{0}$, $e^{2\pi i\frac{d_jd_k}{m}}$ is a primitive 3rd root of unity, i.e. $\zeta$ or $\zeta^2$. Since $j_2,j_3\in\mathcal{D}(J)\setminus\set{0}$ and $k_2,k_3\in\mathcal{D}(K)\setminus\set{0}$, it follows that
$$H_{J,K}=\begin{bmatrix}1 & 1 & 1\\
1 & a & b \\
1 & c & d\end{bmatrix},$$
where $a,b,c,d\in\set{\zeta,\zeta^2}$. We also have that
\begin{align*}\frac{a}{b}&=e^{2\pi i\frac{j_2k_2}{m}}e^{-2\pi i\frac{j_2k_3}{m}}\\
&=e^{2\pi i\frac{j_2(k_2-k_3)}{m}}\\
&\in\set{\zeta,\zeta^2},\end{align*}
because $j_2\in\mathcal{D}(J)\setminus\set{0}$ and $(k_2-k_3)\in\mathcal{D}(K)\setminus\set{0}$. It follows that $a\neq b$. By similar reasoning, $a\neq c$ and $b\neq d$. Hence,

$$H_{J,K}=\begin{bmatrix}1 & 1 & 1 \\ 1 & \zeta & \zeta^2 \\ 1 & \zeta^2 & \zeta\end{bmatrix}\text{ or }H_{J,K}=\begin{bmatrix}1 & 1 & 1 \\ 1 & \zeta^2 & \zeta \\ 1 & \zeta & \zeta^2\end{bmatrix},$$
and so $H_{J,K}$ is Hadamard.
\end{proof}

Thus, for $G(m,3)$ we get a similar description of when an edge appears between vertices based on comparing the $p$-adic orders of the elements to the $p$-adic orders of $m$. It is important to note that the theorem assumes one is testing primitive sets arising from $3$-element sets. In the $n=3$ case, the primitive sets can have either 2 or 3 elements, but not all $2$- or $3$-element sets of divisors of $m$ are realizable as $m$th primitive sets (see, for example, Theorem \ref{mutexcthm}).

As an example of Theorem \ref{3by3thm}, consider the graph $G(180,3)$, shown below:

\begin{figure}[H]
\includegraphics[scale=.25]{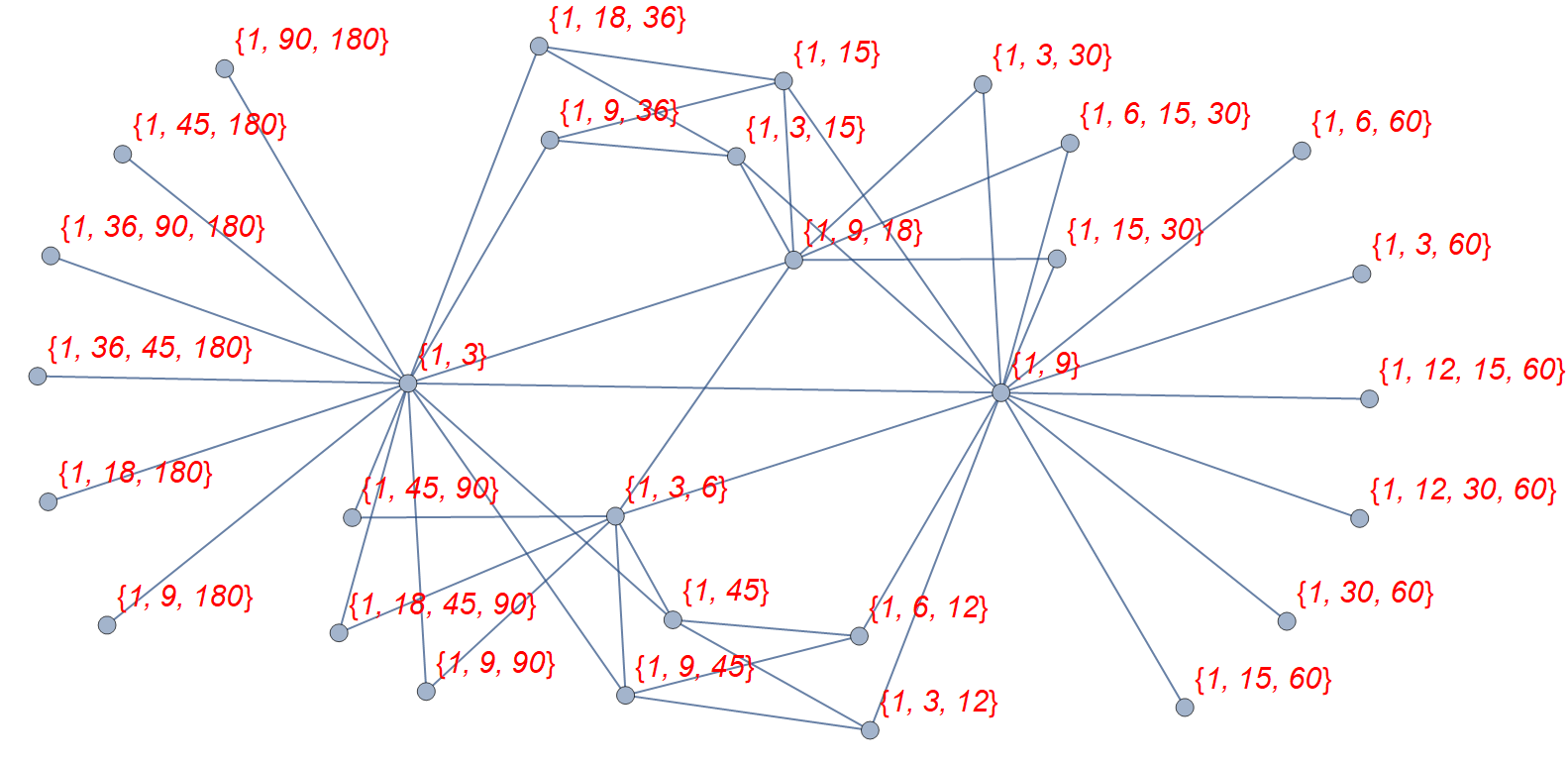}
\caption{G(180,3)}\label{R180n3}
\end{figure}

Since $180=2^2\cdot3^2\cdot5$, Theorem \ref{3by3thm} implies there will be an edge between valid primitive sets if and only if the maximum and minimum $3$-adic orders add up to exactly 3, the maximum $2$-adic orders add up to no more than 2, and the maximum 5-adic orders add up to no more than 1.

Let us pick, say, $\set{1,9,45}$ and $\set{1,6,12}$. We have:
\begin{align*} \nu_3^\text{max}(\set{9,45})+\nu_3^\text{max}(\set{6,12})&=2+1=3;\\
\nu_3^\text{min}(\set{9,45})+\nu_3^\text{min}(\set{6,12})&=2+1=3;\\
\nu_2^\text{max}(\set{9,45})+\nu_2^\text{max}(\set{6,12})&=0+2\leq2;\\
\nu_5^\text{max}(\set{9,45})+\nu_5^\text{max}(\set{6,12})&=1+0\leq1.
\end{align*}

Thus, by Theorem \ref{3by3thm}, $\set{1,9,45}$ and $\set{1,6,12}$ should be connected in the graph $G(180,3)$, and we see in Figure \ref{R180n3} that they are. On the other hand, we notice that $\set{1,15,60}$ and $\set{1,30,60}$ are not connected. This is because $\nu_2^\text{max}(\set{1,15,60})+\nu_2^\text{max}(\set{1,30,60})=2+2=4>2.$

\section{Other Compatibility Graph Examples}

Using the computer program Mathematica, the authors have conducted full searches for Hadamard submatrices of certain sizes for Fourier matrices of certain sizes, and have thus been able to fully construct $G(m,n)$ for a number of combinations of $m$ and $n$. Some of these graphs were shown in the previous section.

These exhaustive searches, however, are much cruder than what the results in this paper make possible. The efficiency afforded by Theorem \ref{equivtheorem} is that once one has tested a particular $J$ and $K$ with primitive sets $\mathcal{P}_m(J)$ and $\mathcal{P}_m(K)$, any other $J$ and $K$ with the same primitive sets will similarly form a Hadamard matrix or not, and so no computer check of them is necessary. Hence in theory, the compatibility graph of a Fourier matrix can be constructed much faster than an exhaustive search for its Hadamard submatrices. Further efficiencies are made possible by the relationship of compatibility graphs to each other via such results as Theorem \ref{mutexcthm} and Theorem \ref{PrimThm}.

To satiate the reader's curiosity, we conclude by displaying a few compatibility graphs outside the $G(m,2)$ and $G(m,3)$ cases:

\begin{figure}[H]
\includegraphics[scale=.25]{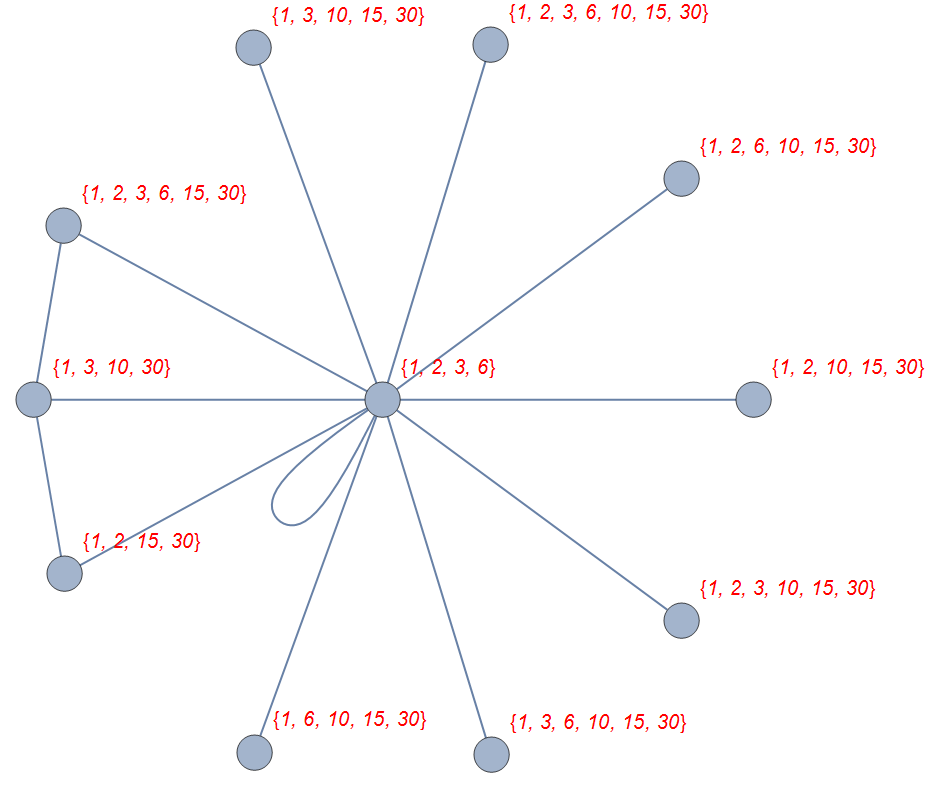}
\caption{G(30,6)}
\end{figure}

Note that in the case of $G(30,6)$ above we have a dominant primitive set $\set{1,2,3,6}$, in the sense that it is compatible with all primitive sets that appear in the graph. This behavior is also on display in the graph $G(36,4)$ below, and many others:

\begin{figure}[H]
\includegraphics[scale=.25]{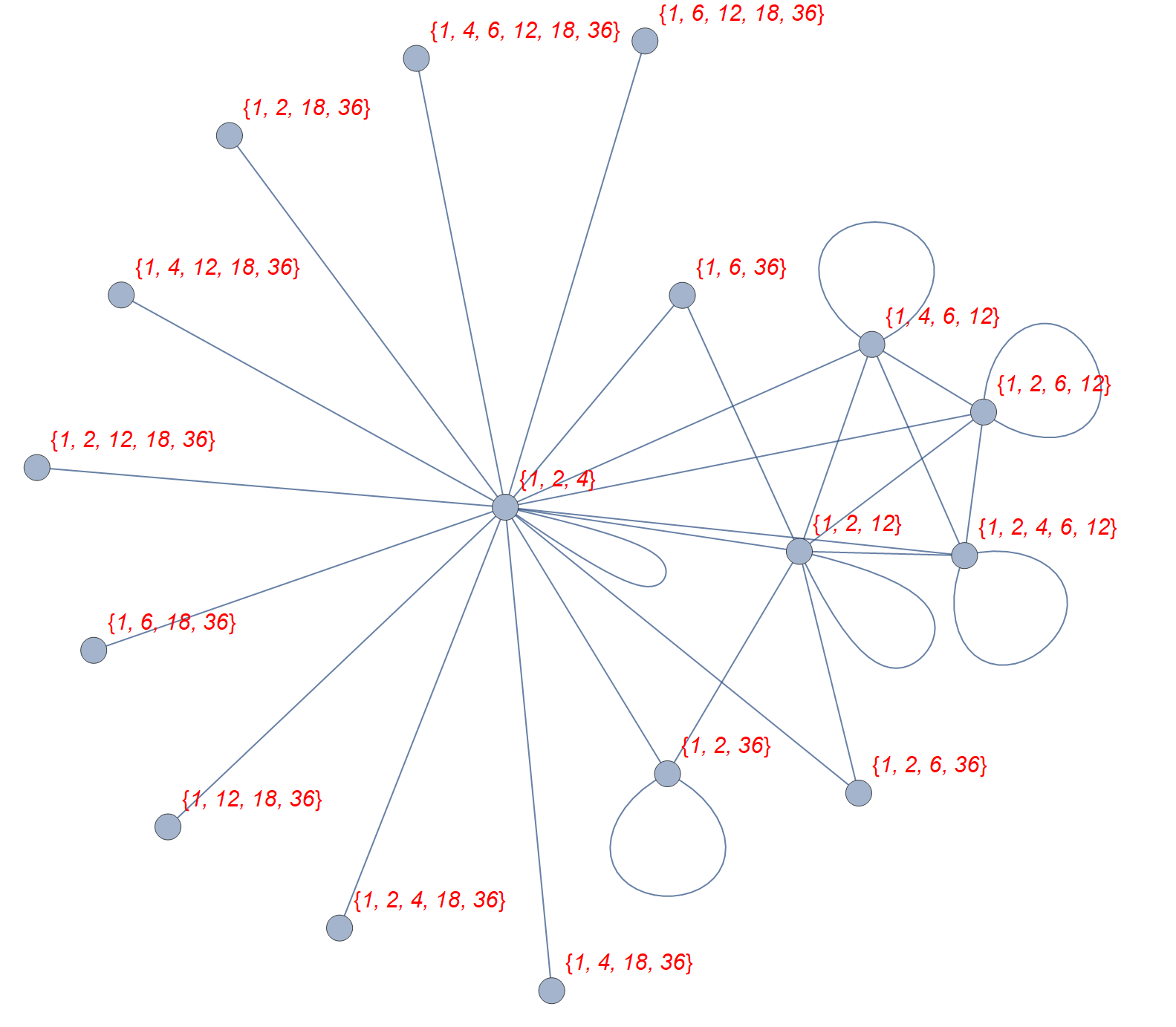}
\caption{G(36,4)}
\end{figure}

\section{Conclusions}

In this paper, we have shown that primitive set compatibility determines whether row and column selections in a Fourier matrix yield a Hadamard submatrix. This allows us to group submatrices into equivalence classes defined by the combination of primitive sets coming from their row and column sets, each equivalence class consisting entirely of Hadamard or entirely of non-Hadamard submatrices. The equivalence classes that do consist of Hadamard submatrices may be represented as edges in ``compatibility graphs,'' which visually represent the Hadamard submatrix structure of a Fourier matrix. We have also shown that primitive set compatibility in one submatrix size precludes compatibility in any other size, regardless of the order of the Fourier matrix.

Since our motivation comes from the application of Hadamard triples to the Universal Tiling Conjecture, for future directions we would like to explore the relationship between the primitive set of a row selection and its tiling set.

\bibliographystyle{amsalpha}
\bibliography{HerrWiegand2020bib}

\providecommand{\bysame}{\leavevmode\hbox to3em{\hrulefill}\thinspace}
\providecommand{\MR}{\relax\ifhmode\unskip\space\fi MR }
\providecommand{\MRhref}[2]{%
  \href{http://www.ams.org/mathscinet-getitem?mr=#1}{#2}
}
\providecommand{\href}[2]{#2}
\begin{thebibliography}{BGH19}

\bibitem[BGH19]{BGH19}
Bailey~Madison Bond, R.~Alexander Glickfield, and John~E. Herr, \emph{On the
  existence of complex {H}adamard submatrices of the {F}ourier matrices},
  Demonstr. Math. \textbf{52} (2019), no.~1, 1--9. \MR{3891747}

\bibitem[CM99]{MR1670646}
Ethan~M. Coven and Aaron Meyerowitz, \emph{Tiling the integers with translates
  of one finite set}, J. Algebra \textbf{212} (1999), no.~1, 161--174.
  \MR{1670646}

\bibitem[DHL19]{DHL19}
Dorin~Ervin Dutkay, John Haussermann, and Chun-Kit Lai, \emph{Hadamard triples
  generate self-affine spectral measures}, Trans. Amer. Math. Soc. \textbf{371}
  (2019), no.~2, 1439--1481. \MR{3885185}

\bibitem[DJ13]{DJ13}
Dorin~Ervin Dutkay and Palle E.~T. Jorgensen, \emph{On the universal tiling
  conjecture in dimension one}, J. Fourier Anal. Appl. \textbf{19} (2013),
  no.~3, 467--477. \MR{3048586}

\bibitem[Haa97]{Ha97}
Uffe Haagerup, \emph{Orthogonal maximal abelian {$*$}-subalgebras of the
  {$n\times n$} matrices and cyclic {$n$}-roots}, Operator algebras and quantum
  field theory ({R}ome, 1996), Int. Press, Cambridge, MA, 1997, pp.~296--322.
  \MR{1491124}

\bibitem[JP98]{JP98}
Palle E.~T. Jorgensen and Steen Pedersen, \emph{Dense analytic subspaces in
  fractal {$L^2$}-spaces}, J. Anal. Math. \textbf{75} (1998), 185--228.
  \MR{1655831}

\bibitem[KM06]{KM06}
Mihail~N. Kolountzakis and M\'{a}t\'{e} Matolcsi, \emph{Complex {H}adamard
  matrices and the spectral set conjecture}, Collect. Math. (2006), no.~Vol.
  Extra, 281--291. \MR{2264214}

\bibitem[Tad06]{Tad06}
Wojciech Tadej, \emph{Permutation equivalence classes of {K}ronecker products
  of unitary {F}ourier matrices}, Linear Algebra Appl. \textbf{418} (2006),
  no.~2-3, 719--736. \MR{2260224}

\bibitem[Tao04]{Tao04}
Terence Tao, \emph{Fuglede's conjecture is false in 5 and higher dimensions},
  Math. Res. Lett. \textbf{11} (2004), no.~2-3, 251--258. \MR{2067470}

\bibitem[TZ06]{TZ06}
Wojciech Tadej and Karol \.{Z}yczkowski, \emph{A concise guide to complex
  {H}adamard matrices}, Open Syst. Inf. Dyn. \textbf{13} (2006), no.~2,
  133--177. \MR{2244963}

\end{thebibliography}
\nocite{*}

\end{document}